\pgfplotsset{compat=1.18}
\newtheorem{theorem}{Theorem}[section]   
\newtheorem{lemma}[theorem]{Lemma}       
\newtheorem{proposition}[theorem]{Proposition}
\definecolor{P4}{HTML}{DCEBF7}    
\definecolor{P8}{HTML}{B7D7F7}    
\definecolor{P16}{HTML}{76B9F2}   
\definecolor{P32}{HTML}{1E90FF}   
\definecolor{P64}{HTML}{0047AB}   
\definecolor{S8}{HTML}{FFDFA6}    
\definecolor{S16}{HTML}{FFB347}   
\definecolor{S32}{HTML}{FF8C00}   
\definecolor{S64}{HTML}{FF6600}   
\definecolor{SPEC}{HTML}{9370DB}  
\definecolor{INV}{HTML}{C71585}   
\newcolumntype{P}{>{\centering\arraybackslash}p{0.7cm}} 
\newcolumntype{Q}{>{\centering\arraybackslash}p{0.7cm}} 
\title{Stabilization and Regaining Periodicity in Modular Laplacian Dynamics}
\author{Małgorzata Nowak-Kępczyk}
\date{}
\begin{document}
	\maketitle

\begin{abstract}
We study discrete Laplacians on two–dimensional lattices under modular iterations, focusing on the emergence of nontrivial large–scale patterns.
While purely binary or constant modular sequences quickly collapse into strict periodicity, the insertion of a single non–binary step $k$ yields qualitatively new behavior.
Through extensive computer–assisted exploration we identify a taxonomy of long–lived figures — rugs, quasi–carpets, and carpets — whose occurrence depends systematically on seed symmetry, neighborhood mask, and sequence structure.
In particular, we show that mixed families of the form $[2,k,2^s]$ can stabilize high–density carpets beyond the universal decay time characteristic of binary dynamics.

Our approach combines algebraic replication laws with large–scale simulations and density tracking, producing both theoretical conditions (periodicity via Lucas’ theorem, non–overlap criteria) and experimental evidence of persistent quasi–aperiodic architectures.
The results highlight how minimal modifications in discrete local rules generate unexpectedly rich multiscale geometries, bridging rigorous analysis with computer–assisted discovery.
\end{abstract}

\noindent\textbf{Keywords:} Discrete Laplacians; Modular dynamics; Periodicity and quasi-periodicity; Fractals; Cellular automata.

\section*{Introduction}

Discrete Laplacians on infinite lattices form a fundamental class of operators 
in discrete analysis and computational mathematics. Their iterations on simple 
seeds generate evolving patterns that can be studied both algebraically and 
geometrically, often yielding unexpectedly rich behavior \cite{Ilachinski2001}. 
From the dynamical systems perspective they resemble cellular automata and 
lattice gases, where local rules produce complex global structures.  

A striking feature of these systems is their ability to create intricate forms—
from Sierpiński-like gaskets to carpet-like tilings—despite their minimalistic 
definition. This places them at the intersection of experimental mathematics 
and computational modeling, extending the tradition of computer-assisted 
discoveries in dynamical systems, including fractals, percolation clusters, 
and quasi-crystals.

\paragraph{Related work.}
Periodicities and replication properties of discrete Laplacians were analyzed 
in earlier studies \cite{Hadlich2011}.  
Beyond mathematics, similar principles appear in self–assembly phenomena: 
Whitesides and Grzybowski \cite{Whitesides2002} highlighted the ubiquity of 
self–organization across scales, while Singh \emph{et al.} 
\cite{Singh2024NonEquilibrium} emphasized its role in sustaining 
living–matter–like architectures.  
Recent experiments even demonstrated the emergence of Sierpiński–triangle 
motifs in protein assemblies \cite{Sendker2024Fractal}, 
suggesting that discrete Laplacian dynamics may capture essential aspects 
of replication and stabilization in natural systems.

\paragraph{Previous studies.}
Our earlier works examined binary, ternary, and higher–order modular Laplacian 
dynamics, including their geometrical aspects, fractal and chaotic behavior, and 
algebraic interpretations \cite{Suzuki2018Geometrical,Lawrynowicz2019,NowakKepczyk2025Preprint}. 
These studies provide the broader background for the present work, where we focus 
on carpets and quasi–carpets generated by mixed modular sequences of the form $[2,k,2^s]$.

In this paper we explore periodicity, quasi-periodicity, and breakdown phenomena arising from modular extensions of discrete Laplacians on two-dimensional lattices. Our contributions are threefold:

We establish replication and non-overlap conditions for constant-k sequences, deriving clear k-adic periodic laws in the prime case.

We analyze alternating and perturbed binary sequences, showing how minimal modifications destroy strict periodicity and give rise to quasi-periodic and chaotic regimes.In this paper we explore periodicity, quasi-periodicity, and breakdown phenomena 
arising from modular extensions of discrete Laplacians on two-dimensional lattices. 
Our contributions are threefold:
\begin{itemize}
	\item We establish replication and non-overlap conditions for constant-$k$ sequences, 
	deriving clear $k$-adic periodic laws in the prime case.
	\item We analyze alternating and perturbed binary sequences, showing how minimal 
	modifications destroy strict periodicity and give rise to quasi-periodic and chaotic regimes.
	\item We introduce a taxonomy of emergent structures---rugs, quasi-carpets, and carpets---based 
	on seed/mask symmetries and density evolution, and identify the $[2,k,2^s]$ family 
	as a minimal setting capable of sustaining long-lived carpets.
\end{itemize}

Taken together, these results provide a systematic classification of discrete Laplacian dynamics. They bridge rigorous replication laws with computational taxonomy, and point toward interdisciplinary relevance wherever discrete local interactions generate complex spatial architectures.

	\section{Model and definitions}  
	
	In this work, we study sequences of \textit{discrete Laplacians} defined on an infinite, regular square lattice. The initial state of the system, denoted by $\nu_0$, is specified by a chosen \textit{seed}---a finite set of cells with value~1 (or other positive integers)---surrounded by an empty background (value~0).  
	
	\paragraph{Cellular automaton.}  
	We investigate the evolution of a cellular automaton whose local dynamics are governed by the discrete Laplacian operator:  
	\begin{equation}
		\Delta u(p) \,= \, \sum_{g \in N(p)} \big( u(g) - u(p) \big) \pmod{k_i}, \quad k_i \in \{2,3,4, \dots\},
	\end{equation}
	where $N(p)$ denotes the neighborhood of site $p$ according to a prescribed mask, and $k_i$ at $i$-th step od evolution is the $i$-th element of a chosen sequence of positive integers:  
	\begin{equation}
		k_1, k_2, k_3,\ldots, \qquad k_i \in \mathbb{Z}^+,\;\; k_i \geq 2,\;\; i=1,2,3,\ldots. \label{sequence}
	\end{equation}  
	In most cases the binary sequence: 2,2,2,2... is considered.
	At each iteration we evolve
	\[
	u_{t+1} = L u_t \pmod{k_t},
	\]
	where $k_t$ is the $t$-th element of the sequence (\ref{sequence}). If $k_t=2$, $k_t=3$, $k_t=4$, and so on, we refer to the corresponding step (Laplacian) performed as a binary, ternary, or quaternary iteration (Laplacian), respectively.

\paragraph{Seeds and figures.}  
Classically, in Laplacian constructions, a seed was considered to be a nonempty set of nonzero cells contained in a $3\times 3$ square. 
In this paper we shall call it a \emph{small seed}. 
A seed enclosed in a square of $18\times 18$ cells will be called a \emph{medium seed}, and one enclosed in a square of $84\times 84$ cells a \emph{large seed}.  
A \emph{figure} $F$ is any finite set of nonzero cells. In particular, any figure can itself be regarded as a seed. 
These three sizes will serve as reference seeds throughout the experiments.  

\begin{figure}[h]
	\centering (a)
	\includegraphics[width=0.6\textwidth]{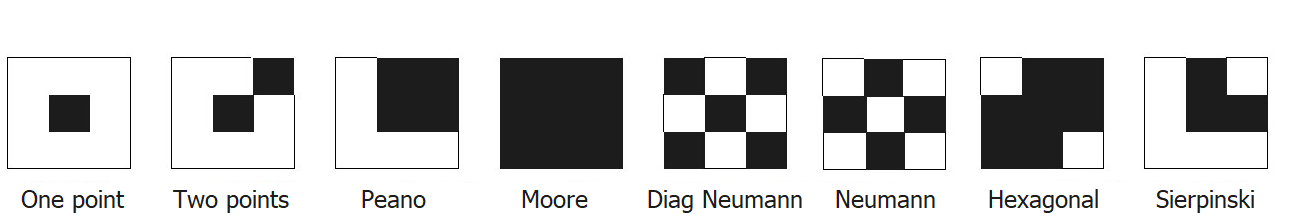} 
	$\;\;\;$(b)\includegraphics[width=0.07\textwidth]{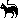}
	$\;\;$(c) \includegraphics[width=0.08\textwidth]{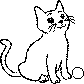}
	\caption{Examples of seeds: (a) small, (b) medium, and (c) large.}
	\label{seed}
\end{figure}  

\paragraph{Neighborhoods.}  
We employ a range of neighborhood masks of even and odd degree, including the standard Neumann, diagonal Neumann, Moore, Tannenbaum, as well as hexagonal and L-shaped configurations (see Fig.~\ref{neighbor}). 
In this paper, each sequence of Laplacians is assumed to use a constant chosen mask. 
For completeness, less frequently used masks (Tannenbaum, L-shaped) are also illustrated; their specific role will be clarified in the Results section.  

\begin{figure}[h]
	\centering
	\includegraphics[width=0.72\textwidth]{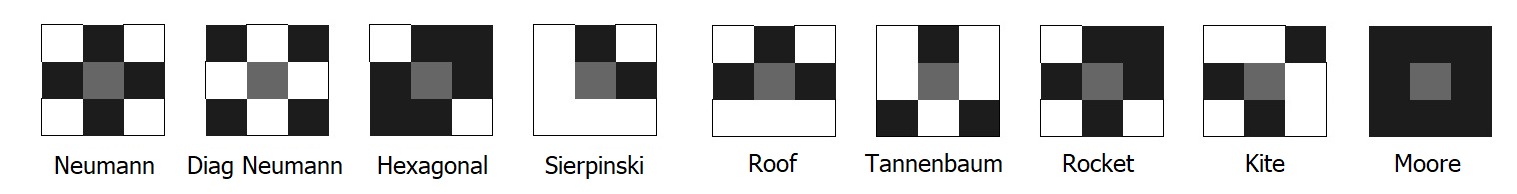}
	\caption{Examples of neighborhood masks.}
	\label{neighbor}
\end{figure}

\paragraph{Periods.}  
Let $L$ be a Laplace operator (depending on the neighborhood mask and modulus $k$ or $k_t$).  
Each figure $F$ can be tightly enclosed in a rectangle $r(F)$. 
A positive integer $\tau$ is called a \emph{period} of $F$ if  
\[
L^\tau(F)=\bigcup_{i=1}^s T_iF, \qquad s\ge 2,
\]  
for some lattice shifts $T_i$. Intuitively, after $\tau$ steps the figure decomposes into several shifted copies of itself.  

\begin{itemize}
	\item $\tau$ is a \emph{small period} if some enclosing rectangles $r(T_iF)$ overlap,  
	\item $\tau$ is a \emph{large period} if all $r(T_iF)$ are pairwise disjoint,  
	\item $\tau$ is a \emph{shifted period} if $F=L^{t_0}(S)$ for some seed $S$ and $t_0>0$, i.e.  
	\[
	L^{t_0+\tau}(S)=\bigcup_{i=1}^s T_i\,L^{t_0}(S).
	\]
\end{itemize}  

The \emph{minimal period} is the smallest $\tau$ satisfying these conditions.\footnote{On finite tori, every trajectory is ultimately periodic by Poincar\'e recurrence in a finite state space. In this paper we restrict attention to the infinite lattice $\mathbb{Z}^2$, where no such recurrence is guaranteed.}  

\paragraph{Relative density, period ratio, color densities, and entropy.}
Let $A_t=\{p\in\mathbb{Z}^2:\ u_t(p)\neq 0\}$ denote the support at time $t$, and let $r_t$ be the tight axis-aligned bounding rectangle of $A_t$. We define the
\emph{relative density} and the \emph{period ratio} by
\[
\rho_t\;=\;\frac{|A_t|}{|r_t|},\qquad 
\kappa_{t,\tau}\;=\;\frac{\rho_{t+\tau}}{\rho_t}.
\]
For moduli $k>2$ we also track \emph{per-residue densities}
\[
\rho_t^{(c)}\;=\;\frac{\big|\{p:\ u_t(p)\equiv c\ (\mathrm{mod}\ k)\}\big|}{|r_t|},\quad c=0,1,\ldots,k-1,
\]
and the \emph{color entropy}
\[
H_t\;=\;-\sum_{c=0}^{k-1}\rho_t^{(c)}\log \rho_t^{(c)}, \qquad (0\log 0:=0).
\]

Sharp seed returns yield pronounced minima of $\rho_t$ (thus $\kappa_{t,\tau}\ll1$ at hits), whereas ternary-rich nonperiodic sequences lack such regular dips or exhibit them only with a systematic shift (e.g.\ $t\equiv 2\pmod 8$ for $23222\ldots$).
	
\paragraph{Scope of the study.}
The aim of this paper is to investigate how the choice of seed size, 
neighborhood mask, and sequence of moduli affects the dynamics of discrete 
Laplacians. In particular, we examine replication laws, periodic and 
quasi-periodic behavior, and long-term compactness of figures. 
A central question is the identification of sequences of moduli that 
avoid the binary fate of periodic density loss and enable the persistence 
of quasi-stable carpet-like structures. Quantitative indicators such as 
density, period ratio, and entropy are employed to distinguish between 
periodic, quasi-periodic, and chaotic regimes.

\section{Results}

\subsection{Shape inheritance (global outlines from local masks)}
\label{subsec:shape-inheritance}

While neighborhood masks trivially constrain local adjacencies, it is \emph{not} a priori
clear that large-scale figures generated by Laplacian iterations must inherit the same
global symmetries. In many lattice models (e.g., cellular automata, percolation) emergent
shapes can differ substantially from local neighborhoods. Here we report a robust
\emph{shape inheritance} phenomenon for discrete Laplacians.

\paragraph{Bounding shapes.}
Let $A_t=\{p\in\mathbb{Z}^2: u_t(p)\neq 0\}$ be the support at time $t$ and
$\mathrm{Hull}(A_t)$ its convex hull. We quantify the global outline by either:
(i) the inertia tensor of $A_t$ (principal axes and anisotropy ratio), or
(ii) polygonal features of $\partial\mathrm{Hull}(A_t)$ (salient angle statistics).
These descriptors are invariant under translations and robust to sparse outliers.

\paragraph{Shape inheritance principle.}
A consistent observation across all experiments is that large–scale figures 
inherit the geometry of the neighborhood mask.  
For instance, Neumann and Moore masks produce square outlines, 
the diagonal Neumann mask yields diamonds, while kite and tannenbaum masks 
lead to triangular structures. The mapping mask $\to$ outline is summarized 
in Table~\ref{tab:shape_inheritance} and illustrated in Fig. \ref{fig:inheritance}.  

\begin{table}[h!]
	\centering
	\begin{tabular}{|p{4cm}|p{7cm}|}
		\hline
		\textbf{Neighborhood mask} & \textbf{Inherited large–scale shape} \\ \hline
		Moore, von Neumann & Square shapes (orthogonal symmetry) \\ \hline
		diag Neumann & Diamond shape (rotated squares) \\ \hline
		Kite & Skew triangular shapes \\ \hline
		Hexagonal & Hexagonal shapes \\ \hline
		Rocket & Pentagonal shape \\ \hline
		Tannenbaum, Roof & Triangular (Eiffel tower, inverted roof) shapes \\ \hline
	\end{tabular}
	\caption{Shape inheritance: neighborhood mask $\to$ global carpet outline.}
	\label{tab:shape_inheritance}
\end{table}

\begin{figure}[htbp]
	\centering
	\includegraphics[width=0.11	\textwidth]{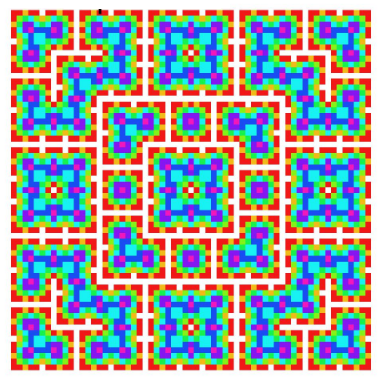}
	\includegraphics[width=0.11\textwidth]{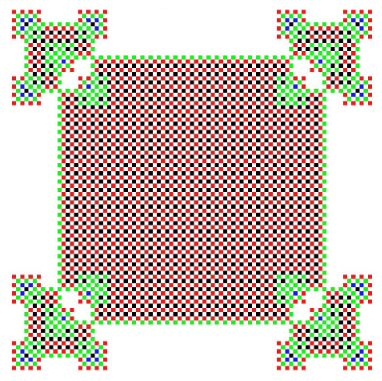}
	\includegraphics[width=0.11\textwidth]{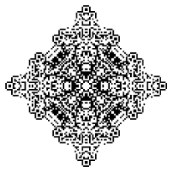}
	\includegraphics[width=0.11\textwidth]{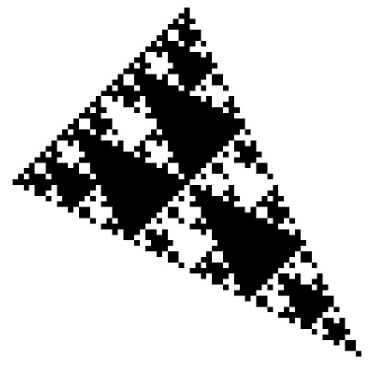}
	\includegraphics[width=0.11\textwidth]{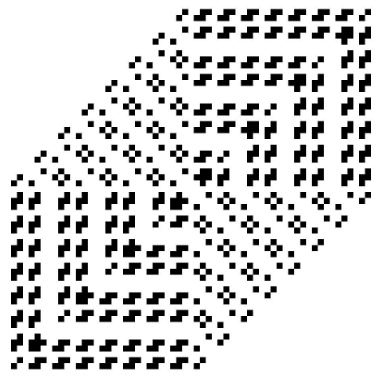}
	\includegraphics[width=0.11\textwidth]{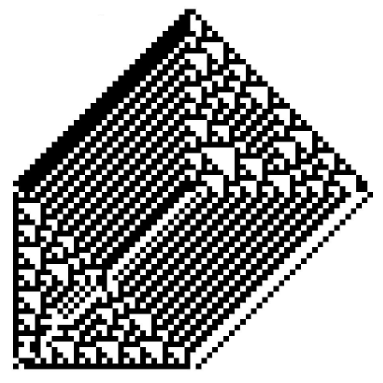}
	\includegraphics[width=0.11\textwidth]{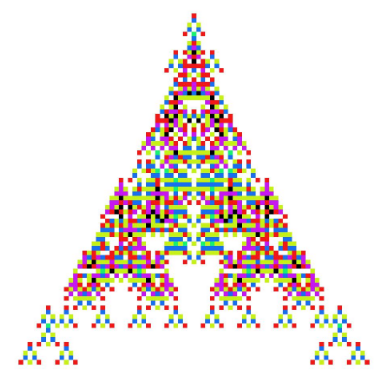}
	\caption{Shape inheritance across masks (medium seed; constant-$k$): 
		(a) Moore $\to$ square grid outline; 
		(b) diagonal Neumann $\to$ square outline; 
		(c) von Neumann $\to$ diamond outline;
		(d) kite $\to$ skew triangle outline;	
		(e) hexagonal $\to$ hexagon outline; 
		(f) rocket $\to$ pentagon-like outline.
		(g) tannenbaum $\to$ Eiffel tower-like outline.
		Outlines are extracted from $\partial\mathrm{Hull}(A_t)$; see Fig.~\ref{neighbor} for mask definitions.}
	\label{fig:inheritance}
\end{figure}

\paragraph{Remarks.}
(1) Shape inheritance principle is \emph{empirical but systematic}: it holds for all
tested combinations of seeds and moduli, and across long horizons $t$ where
replication or diffusion effects dominate local transients.
(2) The inheritance is measured at the level of the \emph{global envelope}
($\mathrm{Hull}(A_t)$ or inertia axes), not individual motifs; local microstructure
may break or refine symmetries without altering the outline class.
(3) In further subsections  we show that periodic
replication and its breakdown (e.g., in alternating or mixed sequences) modulate density
and entropy, yet the bounding shape remains predicted by the mask.

\paragraph{Quantitative outline descriptor.}
To support the visual classification, we measured simple geometric features of the 
bounding hull $\partial\mathrm{Hull}(A_t)$. In all cases, the observed outlines 
(square, diamond, hexagon, pentagon) aligned with the symmetry of the underlying 
mask. This confirms that the large-scale envelope of the figure inherits the 
mask geometry, even when local microstructure becomes irregular. 

\medskip
This shape inheritance provides a unifying constraint linking local update rules 
to global envelopes. In the next subsections we refine this picture by analyzing 
replication laws in constant-$k$ sequences and their breakdown under alternating 
or mixed schedules.

\subsection{Constant $k$-nary Laplacian sequence $kkkk\ldots$}

\paragraph{Frobenius endomorphism and periodicity of Laplacian automata}

Consider a linear cellular automaton over a finite field $\mathbb{F}_p$ 
with local update operator
\[
T = I + B,
\]
where $I$ is the identity and $B$ is a convolution operator given by the neighborhood mask.
Recall, that in a field of characteristic $p$, the \emph{Frobenius endomorphism} is
\[
\varphi(x) = x^p, \qquad x \in \mathbb{F}_p.
\]
Since $(a+b)^p = a^p + b^p$ in $\mathbb{F}_p$, more generally
\[
(a+b)^{p^m} = a^{p^m} + b^{p^m}.
\]
Consequently, applying this to $T = I+B$ gives
\[
T^{p^m} = (I+B)^{p^m} = I + B^{p^m}.
\]
All cross terms vanish modulo $p$, so after $p^m$ iterations the automaton 
reproduces the seed in the central window, with additional shifted copies 
generated by $B^{p^m}$ at the boundary.

\paragraph{Examples.}
\begin{itemize}
	\item For $p=2$, revivals occur at times $2^m$ ($1,2,4,8,\dots$).
	\item For $p=3$, at times $3^m$ ($1,3,9,27,\dots$).
	\item For $p=5$, at times $5^m$, and so on.
\end{itemize}

\paragraph{Summary.}
Thus the periodic revival of seeds is an immediate consequence of 
the Frobenius endomorphism in finite fields: raising $(I+B)$ to a 
$p^m$-th power eliminates mixed terms, leaving only the identity and 
a shifted convolution.

\subsection*{Binary sequence $222\ldots$}

\begin{proposition}
	Let $L$ be the discrete Laplacian (von~Neumann neighborhood) on $\mathbb{Z}^2$, acting modulo~2.  
	For any fixed residue $r\in\{0,\dots,7\}$, the sequence
	\[
	u_{r+8k} = L^{8k} u_r
	\]
	exhibits self-similar replication: each application of $L^8$ maps the current pattern into four disjoint translated copies at offsets $(\pm16,0)$ and $(0,\pm16)$.
\end{proposition}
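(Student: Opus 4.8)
The plan is to realize $L$, modulo $2$, as a single Laurent polynomial in the two shift operators, exactly as the preceding Frobenius discussion suggests. Writing $S_x,S_y$ for the unit translations along the axes, the von Neumann Laplacian reduces to
\[
L \;\equiv\; S_x + S_x^{-1} + S_y + S_y^{-1} \pmod 2,
\]
since the self-interaction $-4\,u(p)$ is even and drops out. I would stress at the outset that this vanishing of the diagonal term is precisely what forces \emph{four} copies rather than five: there is no surviving identity (``central'') term here, in contrast with the operator $T=I+B$ written for general characteristic $p$. Under the correspondence $L\leftrightarrow \ell(X,Y)=X+X^{-1}+Y+Y^{-1}$ in $\mathbb{F}_2[X^{\pm1},Y^{\pm1}]$, composition of operators becomes multiplication of polynomials, and the whole statement is reduced to computing powers of $\ell$.

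The core is the characteristic-two ``freshman's dream'' already invoked above. Squaring repeatedly, every mixed monomial acquires an even multinomial coefficient and disappears, giving
\[
L^{2^m} \;\equiv\; S_x^{2^m} + S_x^{-2^m} + S_y^{2^m} + S_y^{-2^m} \pmod 2,
\]
i.e.\ convolution by a four-point mask on the axes at displacement $2^m$. Applied to any configuration $u_r$ this returns the mod-$2$ superposition of four copies translated by $\pm 2^m$ along $\pm x$ and $\pm y$. The reconciliation of the displacement $16$ with the period $8$ is then the instructive point I would highlight: a single $L^{8}$ shifts by $8$, but composing two of them, $L^{8}\cdot L^{8}=L^{16}$, does \emph{not} produce the naive sixteen overlapping copies --- by the same characteristic-two cancellation $(S_x^{8}+S_x^{-8}+S_y^{8}+S_y^{-8})^2$ collapses to $S_x^{16}+S_x^{-16}+S_y^{16}+S_y^{-16}$, i.e.\ four copies at the \emph{doubled} offset $16$. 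This telescoping, powered entirely by the Frobenius map, is simultaneously the source of the self-similar replication and of the offset appearing in the statement.

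Two remarks finish the argument. For the ``any residue $r$'' clause I would observe that $L^{8k}$ is a \emph{fixed} convolution, independent of the state it is applied to; hence the four-copy law transports verbatim along the orbit, which is exactly the content of $u_{r+8k}=L^{8k}u_r$. The genuinely delicate step --- and the one I expect to be the main obstacle --- is upgrading ``superposition of four copies'' to ``four \emph{disjoint} copies'' in the sense of the large-period definition. This requires the Frobenius displacement to exceed the diameter of $A_r$, so that the four shifted bounding rectangles are pairwise disjoint and, crucially, no mod-$2$ cancellation occurs \emph{between} copies; establishing it rigorously needs a uniform linear bound on the spread of $A_r$ for $r\in\{0,\dots,7\}$ (a light-cone estimate for the mask), after which comparing that spread with $2^m$ secures both the disjointness and the exact offset.
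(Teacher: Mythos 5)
Your proposal follows the same route as the paper's sketch---write $L$ modulo $2$ as a Laurent polynomial in the shift operators and let the characteristic-two Frobenius/Lucas collapse kill all mixed terms---and your algebra is correct; the differences are in execution, and they all cut in your favor, so they are worth recording. You discard $-4I$ at the outset since $4\equiv 0\pmod 2$, whereas the paper expands a five-index multinomial containing $I^\epsilon$: under its own Lucas constraint $\alpha,\beta,\gamma,\delta,\epsilon\in\{0,8\}$ the identity term $\epsilon=8$ formally survives and is eliminated only by the omitted coefficient $(-4)^{8}\equiv 0$, a step the paper never makes explicit; your version is cleaner and, as you note, explains why exactly four copies appear rather than five. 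More importantly, your displacement arithmetic is right where the paper's is not: indices in $\{0,8\}$ give surviving monomials $S_x^{\pm 8}, S_y^{\pm 8}$, i.e.\ offset $8$ per application of $L^{8}$, while the paper's sketch jumps from that constraint to $S_x^{\pm 16}, S_y^{\pm 16}$; your reconciliation---that offset $16$ is what $(L^{8})^{2}=L^{16}$ produces after a second Frobenius collapse---is the correct reading of the numbers in the statement. Finally, you are right to isolate disjointness as the one step the convolution identity does not give: the collapse yields only a mod-$2$ superposition of four translates, and separating them requires the displacement to dominate the spread of the current support, which is exactly the paper's separate non-overlap lemma ($2t\ge ws$) and is asserted without argument in the proof sketch; indeed, by the paper's own Fig.~\ref{fig:small-big}, early revivals of large seeds do overlap, so \emph{disjoint} holds only once spacing exceeds seed size. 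The light-cone bound you flag is the honest missing ingredient, and with it your proposal is at least as complete as the paper's own proof.
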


\begin{proof}[Sketch of proof]  (The proof of Proposition 1 for the case of one‑point seed and the diagonal Neumann neighborhood was given in ~\cite{NowakKepczyk2025Preprint})
Write
\[
L = S_x + S_{-x} + S_y + S_{-y} - 4I,
\]
where $S_{\pm x},S_{\pm y}$ are unit shifts and $I$ is the identity.  
Over $\mathbb{F}_2$, we expand
\[
L^8 = \sum_{\alpha+\beta+\gamma+\delta+\epsilon=8}
\binom{8}{\alpha,\beta,\gamma,\delta,\epsilon}
S_x^\alpha S_{-x}^\beta S_y^\gamma S_{-y}^\delta I^\epsilon
\pmod{2}.
\]
By Lucas’ theorem, a multinomial coefficient is odd only when each summand index fits into the binary digits of $8_{(2)}=1000_2$. This forces $\alpha,\beta,\gamma,\delta,\epsilon\in\{0,8\}$ and kills all mixed terms.  
Thus the only surviving monomials are
\[
S_x^{\pm 16}, \qquad S_y^{\pm 16},
\]
and their combinations.  

Consequently $L^8$ acts as a sum of four pure translations, producing disjoint copies of the seed at offsets $(\pm16,0)$ and $(0,\pm16)$.  
Iterating the argument shows that $(L^8)^k$ yields a $2^k\times 2^k$ grid of replicas, with spacing doubled each time. This establishes the replication property. \end{proof}

\begin{figure}[h!]
	\centering
	(a)\includegraphics[width=0.75\textwidth]{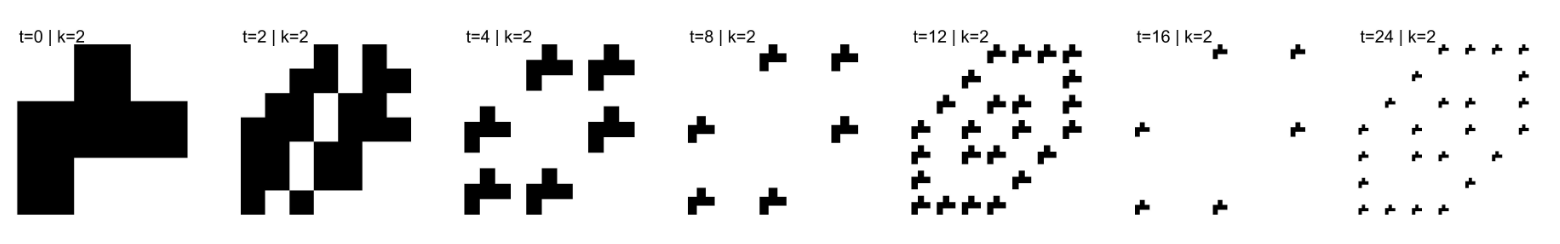}\\[1ex]
	(b)\includegraphics[width=0.75\textwidth]{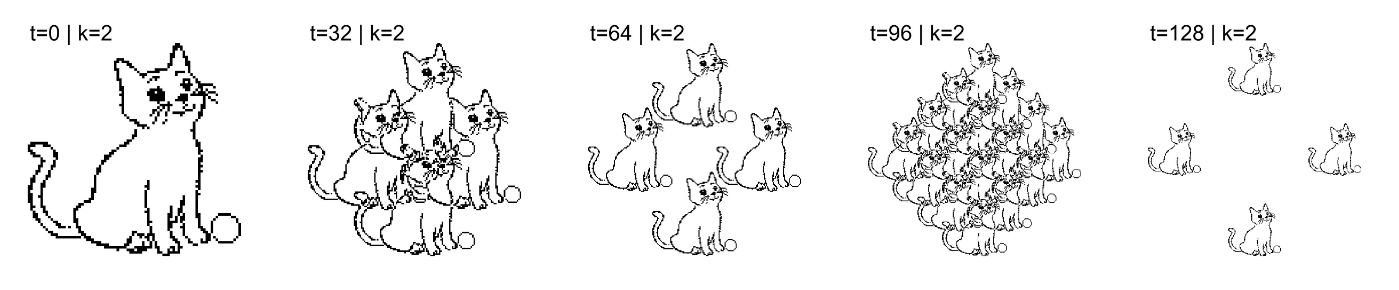}
	\caption{Illustration of small vs.\ big periods in constant binary sequence: 
		(a) small seed, hexagonal mask: rapid seed returns (big period T=2). 
		(b) big seed, Von Neumann mask: overlapping replications at 32 (small period); disjoint replication at $t=128$.}
	\label{fig:small-big}
\end{figure}

\paragraph{From revival to separation.}
While the Frobenius endomorphism explains why seeds reappear at
$p^m$ steps, it does not guarantee that the replicas are disjoint (see Fig. \ref{fig:small-big}).
In practice, early revivals overlap and only later iterations
yield separated copies, which we call the \emph{big period}.
Next we analyze this transition in detail.

\begin{lemma}[Big period via non-overlap]
	Let the seed have size $s\times s$. After $t$ iterations the occupied region 
	has bounding box $(s+2t)\times(s+2t)$. Suppose that at replication time 
	$t\in\mathcal{R}_k$ the pattern appears in a grid of $w_h$ copies 
	horizontally and $w_v$ copies vertically. Then the replicas are disjoint 
	whenever the spacing $2t$ between adjacent copies is at least as large as 
	the corresponding seed extension, i.e.
	\[
	2t \;\ge\; \max\{w_h,\,w_v\}\cdot s .
	\]
	In particular, for symmetric grids ($w_h=w_v=w$) this condition reduces to 
	\[
	T_{\mathrm{big}}(s;k)=\min\{\,t\in\mathcal{R}_k:\; 2t \ge ws \,\},
	\]
	where $\mathcal{R}_k$ is the (mask–dependent) set of replication times.
\end{lemma}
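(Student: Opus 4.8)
The plan is to reduce the global non-overlap condition to a one-dimensional packing inequality along each lattice axis, and then to a single comparison between the replication spacing and the cumulative width of the replicas. Throughout I take the replication law of the preceding Proposition as given: at each $t\in\mathcal{R}_k$ the figure $L^t F$ is a finite union $\bigcup_i T_iF$ of axis-aligned translates of the seed placed on a regular grid, with adjacent copies offset by the replication spacing $2t$ (as in the offsets $(\pm16,0),(0,\pm16)$ at $t=8$).

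First I would fix the two elementary ingredients. (i) The \emph{extent bound}: since the von~Neumann update advances the support by at most one cell per side per step, after $t$ steps the bounding box is contained in a square of side $s+2t$; this is a direct induction on $t$. (ii) The \emph{replica model}: each translate $T_iF$ has the same bounding box as the seed, namely an $s\times s$ square, because the $T_i$ are pure lattice shifts. Together these turn the geometric question into one about disjointness of $w_h\times w_v$ congruent axis-aligned $s\times s$ squares sitting at the grid nodes $T_i$.

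Next I would carry out the non-overlap argument in two steps. The first is a \emph{reduction to neighbors}: for congruent axis-aligned squares at the nodes of a regular grid, all pairs are disjoint as soon as each horizontally adjacent pair and each vertically adjacent pair is disjoint, since any non-adjacent pair is separated by a strictly larger lattice displacement. This collapses the $\binom{w_h w_v}{2}$ conditions to a single worst case. The second is the \emph{one-axis comparison}: along the horizontal axis the $w_h$ replicas must share the available span, so writing the separation budget as the spacing $2t$ and the occupied width as $w_h\cdot s$, adjacent $s$-squares fail to overlap precisely when the budget dominates the occupied width, i.e. $2t\ge w_h\,s$, and symmetrically $2t\ge w_v\,s$ vertically. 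Taking the worse of the two axes gives the stated criterion $2t\ge\max\{w_h,w_v\}\,s$. Specializing to $w_h=w_v=w$ and reading off the first replication time at which the inequality holds yields $T_{\mathrm{big}}(s;k)=\min\{t\in\mathcal{R}_k:\ 2t\ge ws\}$; monotonicity of the left-hand side in $t$ shows this minimum is well defined and that the separation, once achieved, persists at that scale.

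The main obstacle is step (ii) together with the coupling between $t$ and $w$: I must pin down that the replicas are genuinely copies of the \emph{seed} (so that their width is exactly $s$) rather than of an intermediate grown pattern, and that the count $w$ and the spacing $2t$ refer to the \emph{same} replication time. Both follow from the Frobenius/Lucas structure established for $\mathcal{R}_k$, but making the spacing-versus-count bookkeeping uniform across all $t\in\mathcal{R}_k$—so that $2t\ge\max\{w_h,w_v\}\,s$ is the exact threshold and not merely a sufficient condition—is the delicate part, and is where I would concentrate the effort.
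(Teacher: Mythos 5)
Your attempt cannot be checked against the paper's argument for a simple reason: the paper states this lemma bare, with no proof at all (it is followed directly by a remark), so your proof has to stand on its own — and it does not, because the central inequality is never actually derived. Your ``replica model'' posits that adjacent copies are offset by the replication spacing $2t$; but if that premise held, two axis-aligned $s\times s$ translates offset by $2t$ would be disjoint exactly when $2t\ge s$, and no factor $w_h$ could ever enter the criterion. The equivalence you then assert in the one-axis comparison — that adjacent squares fail to overlap ``precisely when'' $2t\ge w_h s$ — is therefore not a consequence of your setup but a contradiction of it (and ``precisely'' is wrong in any case: the lemma claims only sufficiency, via ``whenever''). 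The factor $\max\{w_h,w_v\}$ has to come from dividing the available span among the copies, which your proof never does: the $w_h$ copy centers in one row span at most $(s+2t)-s=2t$, so under even spacing adjacent centers sit $2t/(w_h-1)$ apart, disjointness amounts to $2t\ge(w_h-1)s$, and the lemma's $2t\ge w_h s$ is a clean sufficient strengthening of that. Making this rigorous needs two facts absent from your write-up: the bounding box is \emph{exactly} $(s+2t)\times(s+2t)$ (extremal coefficients of a linear automaton over $\mathbb{F}_p$ never cancel — containment alone, which is all you prove, gives only an upper bound on the spacing and can never yield disjointness), and the copies are evenly spaced (from the sum-set structure of the offset sets).

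Moreover, your spacing premise is false at exactly the replication times where the lemma has content. Take the binary case and $t=24\in\mathcal{R}_2$: since $L^{24}=L^{16}L^{8}$, the copy positions form the sum-set of the two four-element offset sets, i.e.\ sixteen translates whose minimal sup-norm separation is $8$, not $2t=48$; the copies at offsets $(8,0)$ and $(16,8)$ already overlap for a seed of size $s=9$, whereas your model (spacing $2t$) would certify disjointness for all $s\le 48$. The picture ``adjacent copies offset by $2t$'' is valid only at the pure power times $t=p^m$, where the pattern is just the four-copy diamond and $w_h\le 3$; at composite replication times, where $w_h$ is genuinely large, it breaks down, which is precisely the regime the grid count $w_h\times w_v$ in the statement is meant to capture. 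Your closing paragraph concedes that the spacing-versus-count bookkeeping across all of $\mathcal{R}_k$ is the delicate part you have not carried out; that self-assessment is accurate, but that bookkeeping \emph{is} the lemma — what you have written around it (extent bound, reduction to adjacent pairs, well-definedness of the minimum) is the routine part, and the reduction step itself should additionally be phrased in the sup norm so that it covers the diagonal neighbors of the diamond configurations, not only horizontally and vertically adjacent pairs.
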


\paragraph{Remark.}
	In all prime-base cases considered ($k=2,3,5,7$), the replication grids are 
	symmetric ($w_h=w_v$), so the simplified condition $2t \ge w s$ applies.

\subsection*{Prime bases $k=3,5,7,\dots$}

For prime $k$, replication occurs on $k$-adic scales. By Lucas’ theorem in base~$k$, only pure translations survive in $L^{k^2}$, giving replication times $t=m\cdot k^2$.  
These copies become disjoint precisely when the non-overlap condition $2t \ge ws$ holds.

Similarly, for any other prime value of $k$, one obtains $k$-adic replication laws of the same form.

\paragraph{Prime vs.\ power-of-prime bases.}
For prime $k$ the $k$-adic replication law follows directly from
Lucas’ theorem. The same behavior extends to prime powers such as
$k=4,8,9$, where periodicity is governed by the underlying prime
base: $k=4$ and $k=8$ follow binary laws, while $k=9$ follows ternary
ones. In this sense powers of a prime behave ``as prime’’ with respect
to replication, and the distinction between small and big periods is
controlled by the same non-overlap condition.

\begin{figure}[h!]
	\centering 
(a) \includegraphics[width=0.82\textwidth]{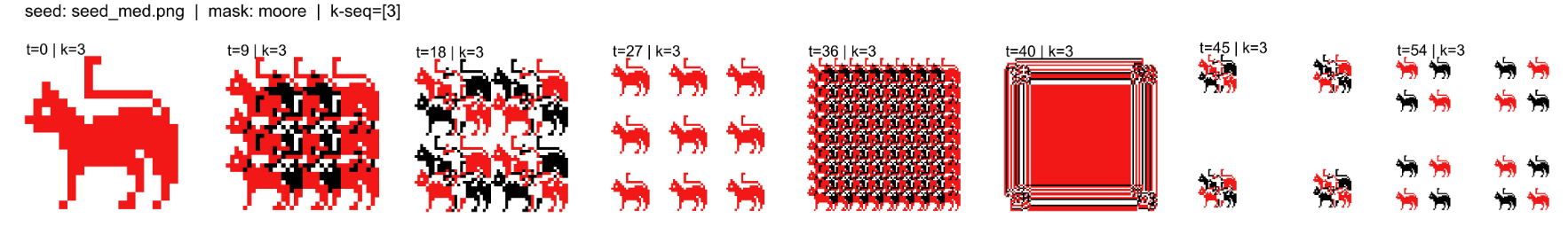}\\
(b) \includegraphics[width=0.82\textwidth]{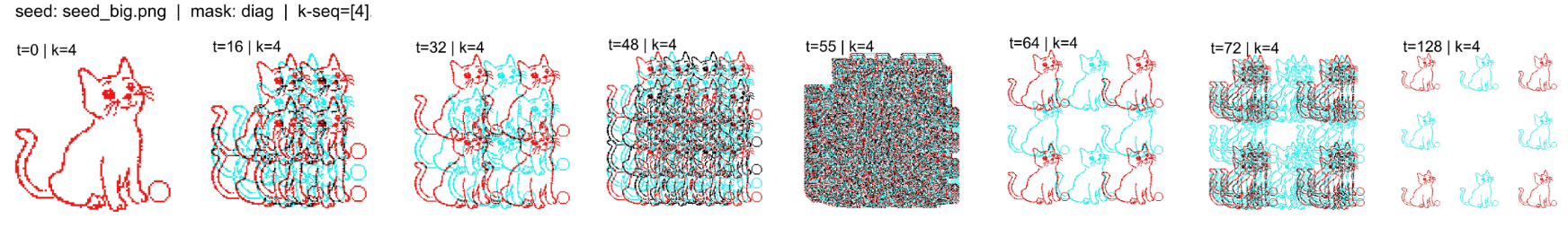}\\
(c) \includegraphics[width=0.82\textwidth]{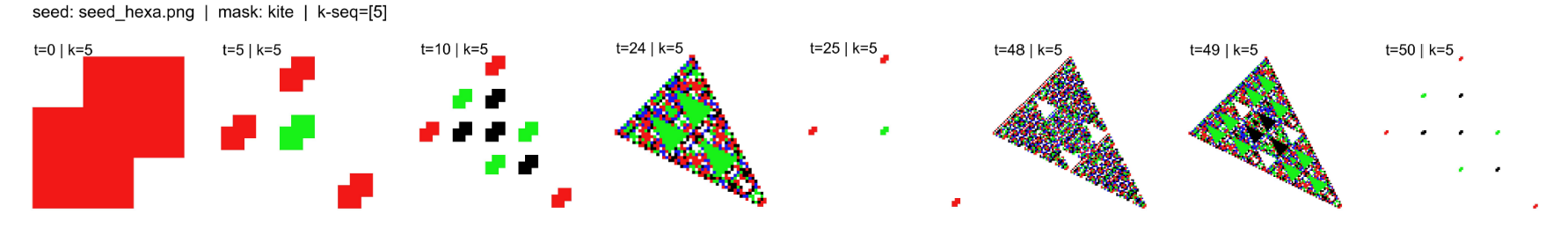}\\
(d) \includegraphics[width=0.82\textwidth]{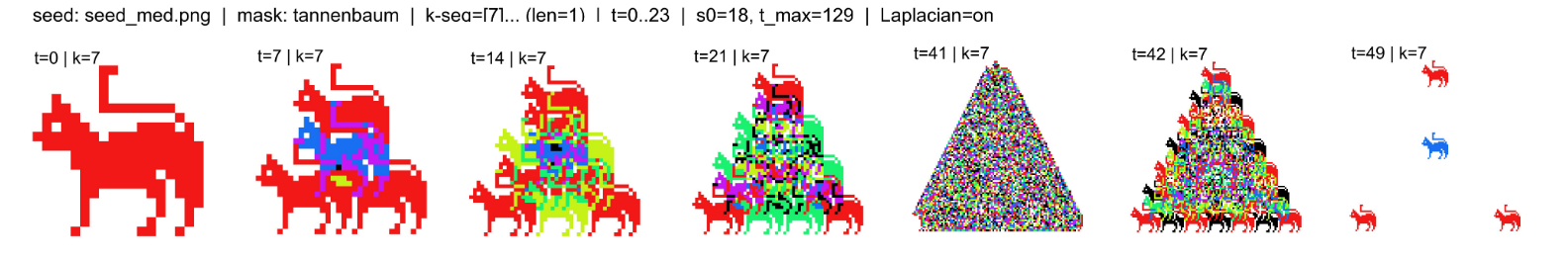}
	\caption{(a) Ternary sequence ($k=3$), medium seed, Moore mask. Early multiples ($9, 18,\dots$) overlap; higher multiples (of 27) yield disjoint seeds as $2t \ge ws$, (b) Quaternary sequence ($k=4$), large seed, diag-Neumann mask. Copies appear every 8-th step but disjoint ones at 128. (c) Quinary sequence ($k=5$), small seed, kite mask. Multiples of $5$ produce separate seeds: small and big periods coincide. (d) Septenary sequence ($k=7$). Multiples of $49$ generate replicated grids; disjoint copies appear when $2t \ge ws$. }
	\label{fig:ternary}
\end{figure}

\begin{table}[h!]
	\centering
	\caption{Small vs.\ big periods for constant-$k$ Laplacian sequences.
		Big period follows the non-overlap rule $2t\ge ws$ (see Lemma).}
	\label{tab:small-big-periods}
	\begin{tabularx}{\textwidth}{>{\raggedright\arraybackslash}p{2.6cm} p{5cm} X}
		\toprule
		\textbf{Sequence} & \textbf{Small period(s)} & \textbf{Big period (seed $s\times s$)} \\
		\midrule
$\mathbf{222\ldots}$ (binary)
& $\rule{0pt}{12pt}16,\,32,\,64,\,96,\dots$ (overlapping returns; self-similarity at scale $8$)
& $T_{\mathrm{big}}(s)=\min\{t\in\mathcal{R}_2:2t\ge ws\}$ \\
		
$\rule{0pt}{12pt}\mathbf{444\ldots}, \mathbf{888\ldots}$ (powers of two)
& Periods on powers-of-two scales (same as binary; mask dependent)
& $T_{\mathrm{big}}(s)=\min\{t\in\mathcal{R}_{2^m}:2t\ge ws\}$ \\

$\rule{0pt}{12pt}\mathbf{333\ldots}$ (ternary)
& Multiples of $27$: $27,\,54,\,81,\,108,\,135,\,162,\dots$
& $T_{\mathrm{big}}(s)=\min\{t\in\{27m:m\in\mathbb{N}\}:2t\ge ws\}$ \\

$\rule{0pt}{12pt}\mathbf{999\ldots}, \mathbf{27\,27\,27\ldots}$ (powers of three)
& Multiples of $81$: $81,\,162,\,243,\dots$ (same rule as ternary, base $3$)
& $T_{\mathrm{big}}(s)=\min\{t\in\{81m:m\in\mathbb{N}\}:2t\ge ws\}$ \\
		
		$\rule{0pt}{12pt}\mathbf{555\ldots}$ (quinary)
		& Multiples of $25$:$\rule{2.5cm}{0cm}$ $25,\,50,\,75,\,100,\,125,\,150,\dots$
		& $T_{\mathrm{big}}(s)=\min\{t\in\{25m:m\in\mathbb{N}\}:2t\ge ws\}$ \\
		
		$\rule{0pt}{12pt}\mathbf{666\ldots}$ (composite)
		& Mixed binary–ternary subperiods: binary-like ($32,64,96,\dots$), ternary-like ($27,54,81,\dots$)
		& Clean separation only when both scales align:
		$T_{\mathrm{big}}(s)=\min\{t\in\mathcal{R}_{2}\cap\mathcal{R}_{3}:2t\ge ws\}$ \\
		
		$\rule{0pt}{12pt}\mathbf{777\ldots}$ (septenary)
		& Multiples of $49$:$\rule{2.5cm}{0cm}$ $49,\,98,\,147,\,196,\dots$
		& $T_{\mathrm{big}}(s)=\min\{t\in\{49m:m\in\mathbb{N}\}:2t\ge ws\}$ \\
		\bottomrule
	\end{tabularx}
\end{table}

\subsection*{Composite base $k=6$}

Since $6=2\cdot 3$, dynamics intertwine binary and ternary scales.  
Small periods reflect binary-like ($32,64,96,\dots$) and ternary-like ($27,54,81,\dots$) returns.  
Clean separation requires alignment of both scales; the big period is given by the lemma with $t\in\mathcal{R}_2\cap\mathcal{R}_3$ (see Fig. \ref{fig:periods-6}).

\begin{figure}[h!]
	\centering
	\includegraphics[width=0.9\textwidth]{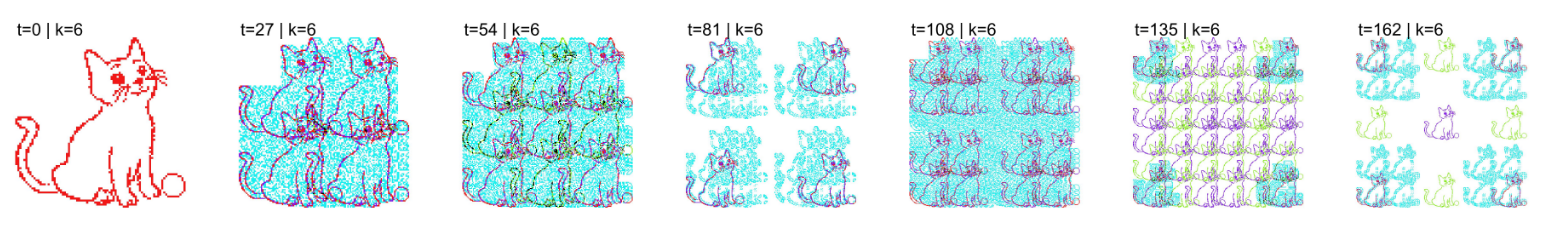}
	\caption{Composite sequence ($k=6$). Alternating binary-like and ternary-like phases. Clean separation is delayed until both scales align.}
	\label{fig:periods-6}
\end{figure}

\paragraph{Color entropies.}  
The behavior is particularly transparent in the entropy traces of 
individual colors (Fig.~\ref{fig:Hc-6}).  
For $k=3,4,5$ the colored entropies remain synchronized, 
reflecting the underlying strict periodicity.  
 
Interestingly, entropy fluctuations occur at times typical for binary and ternary periods: 8, 16, 27, 32, 54, 64, 81, 128. Thus, the desynchronization of colored entropies in $k=6$ directly reflects the interference between binary and ternary replication scales.

\begin{figure}[h!]
	\centering
	\includegraphics[width=0.9\textwidth]{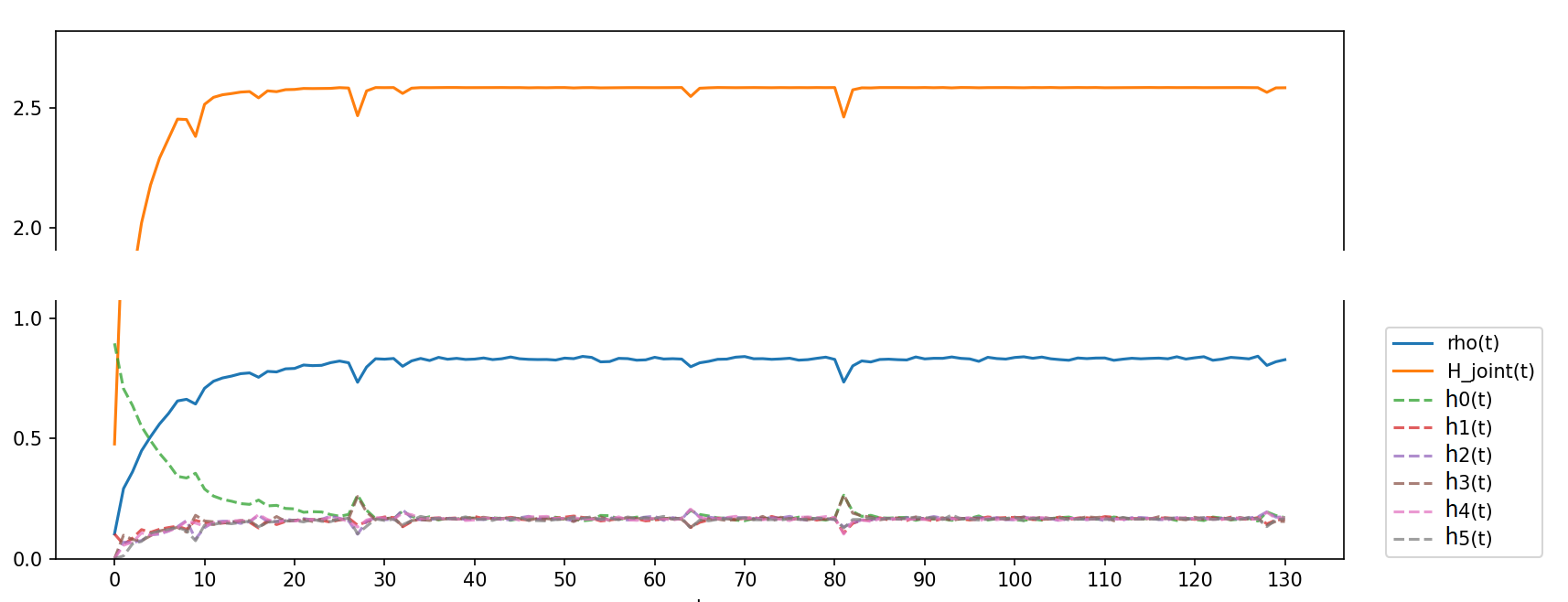}
	\caption{Color entropies in the constant sequence $6666\dots$ 
		with a large seed and Moore mask.  
		Note the appearance of entropy fluctuations at times typical for binary and ternary periods: 8, 16, 27, 32, 54, 64, 81, 128.}
	\label{fig:Hc-6}
\end{figure}

\paragraph{Mini-summary.} 
Prime moduli produce clean $k$-adic replication laws in constant $kkk\ldots$ sequences, with small and big
periods determined by Lucas’ theorem and the non–overlap lemma.  
Prime powers (e.g.\ $k=4,8,9$) follow the same rule as their underlying base, 
so $k=4,8$ are effectively binary while $k=9$ is ternary.  
Composite moduli (e.g.\ $k=6$) mix binary and ternary scales, delaying clean
replication. Periodicity is robust in the prime and prime-power cases, but fragile 
in the composite one.

Table~\ref{tab:small-big-periods} condenses the replication laws across different $k$.
 
The purely binary sequence $[2,2,2,\ldots]$ looks deceptively simple.  
Its behavior is completely predictable: densities oscillate with strict 
periodicity and figures replicate in a rigid grid, eventually thinning. 
The configuration reduces to a handful of dissipated seeds, from which 
a new cycle begins. Thus the binary case appears fully predictable, 
leaving little room for variation. This naturally raises the question 
whether a minimal perturbation of the sequence can alter the outcome. 
In particular, can one introduce a slight modification that destroys 
the trivial periodic scenario? This question motivates our exploration 
of the mixed families $[2,k,2^s]$.

Similarly, we observe analogous predictability in other constant sequences 
such as $[k,k,k,\ldots]$. Here, too, the dynamics ultimately collapse into strict 
periodicity, with the period given by a small power of $k$ (depending on the chosen 
neighborhood). Remarkably, even large and seemingly chaotic seeds, after a brief 
transient phase, suddenly crystallize into multiplied replicas on a rigid grid. 

An exception arises for composite values of $k$ built from two distinct primes 
(such as $k=6$ or $k=10$). In these cases we observe quasi-periodic fluctuations 
rather than pure repetition, yet without giving rise to qualitatively new 
structures. The dynamics remain constrained by the prime components of $k$, 
producing binary- and ternary-like subperiods whose interference prevents 
stable global replication.

\subsection{Alternating binary–$k$ sequences $[2k2k\ldots]$}

These sequences provide the minimal perturbation of the constant binary case, interleaving binary steps with modulus-$k$ steps.

		Consider alternating updates
	\[
	u_{t+1} =
	\begin{cases}
		L u_t \pmod{2}, & t \text{ even},\\
		L u_t \pmod{$k$}, & t \text{ odd}.
	\end{cases}
	\]
	
	\paragraph{Sequences 2$k$2$k$..., $k$ even.}
	
	The dichotomy between even and odd $k$ reflects the Chinese Remainder decomposition: for odd $k$ the system effectively evolves modulo $2k$, while for even $k$ binary periodicity partly survives.
	
	In case of big and medium seeds we obtain periodic figures with small period 16 and shift 1.
	The small seed case gives two possibilities:

	\begin{itemize}
		\item If the seed or mask is not double symmetric we obtain periodic figures.
		\item If the seed and mask are both double symmetric we obtain double symmetric, periodic figures of periodically dropping density.
	\end{itemize}	
	
	Double symmetry here means invariance under both horizontal–vertical and diagonal reflections.
	Illustration of the above observations can be seen in Figs. \ref{2k2k_big2} and \ref{2k2k_small2}.

	\begin{figure}[!]
		\centering
		(a)\includegraphics[width=0.82\textwidth]{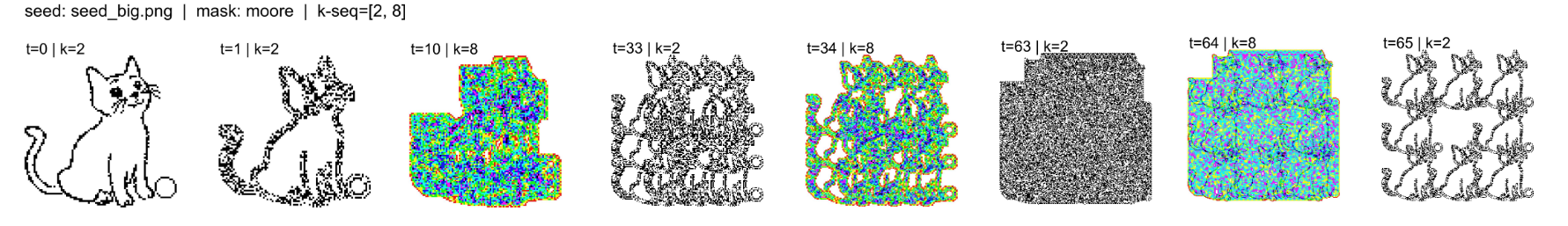}\\
		(b)\includegraphics[width=0.82\textwidth]{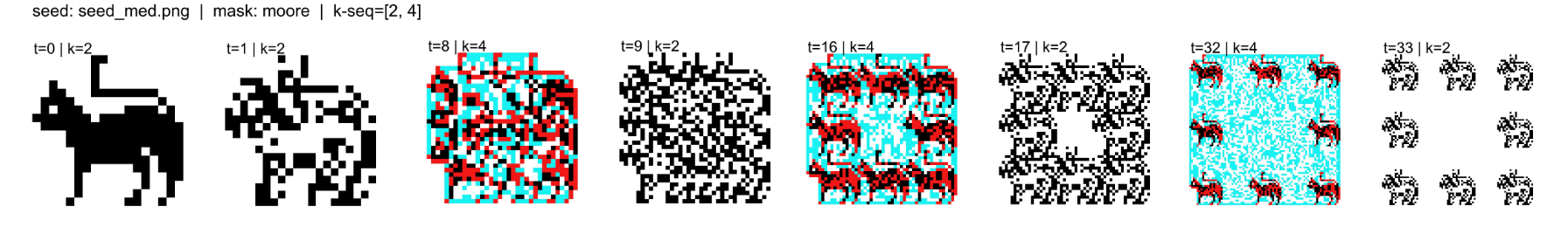}
		\caption{Big and medium seeds under alternating one-to-one 2$k$2$k$... iterations, $k>2$ even, (a) big and (b) medium seeds on Moore mask, $k=7, 3$, resp. resulting in periodic 1-shifted figures of small period 16.}
		\label{2k2k_big2}
	\end{figure}

\begin{figure}[h!]
	\centering
	(a)\includegraphics[width=0.82\textwidth]{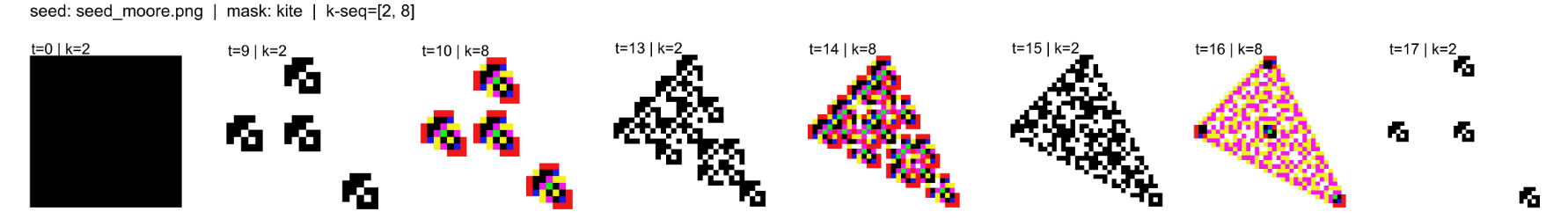}\\
	(a)\includegraphics[width=0.82\textwidth]{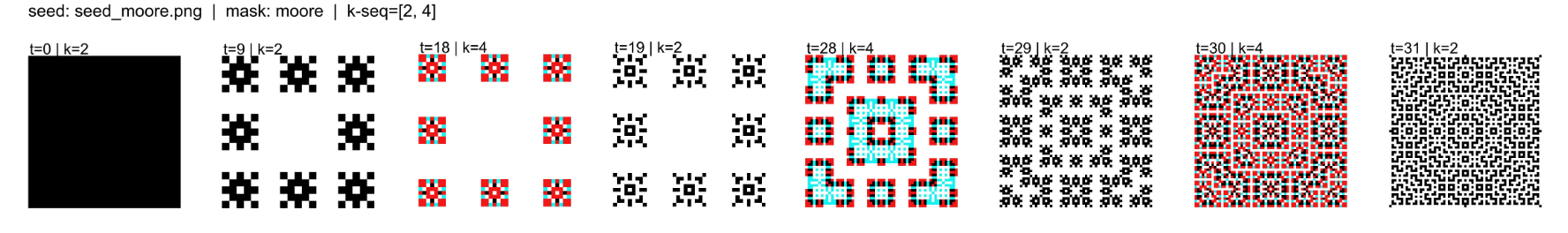}\\
	\caption{Small seeds under alternating one-to-one 2$k$2$k$... iterations, $k>2$ even, (a) non-symmetric seed and/or mask resulting in periodic non-double symmetric figures (here $k=7$, kite mask), (b) symmetric seed and mask resulting in periodic, double symmetric figures with periodically dropping density (diag Neumann mask).}
	\label{2k2k_small2}
\end{figure}

\paragraph{Sequences 2$k$2$k$..., $k>2$ odd.}
	The case of $k=3,5,7...$ is equivalent to working modulo $2k$, via the Chinese Remainder Theorem.
	The $2k2k\ldots$, $k$ odd, sequence destroys the short binary periods;
	empirically no return to seed is observed within practical bounds.
	Thus introducing ternary, quinary, etc.  steps breaks periodicity and increases complexity. 
	
	In case of big and medium seeds the figures obtained become quickly chaotic. The case of small seeds can be split in two:
	\begin{itemize}
		\item If the seed and mask are not symmetric or have conflicting axes of symmetry we obtain chaotic figures.
		\item If the seed and mask are both double symmetric we obtain double symmetric, non periodic figures of relatively high density ornamental carpet-like figures (more on shape inheritance in section \ref{inherit}).
	\end{itemize}
	
	Illustration of the above observations can be seen in Figs. \ref{2k2k_big} and \ref{2k2k_small}.
	
		\begin{figure}[h!]
		\centering
		(a)\includegraphics[width=0.82\textwidth]{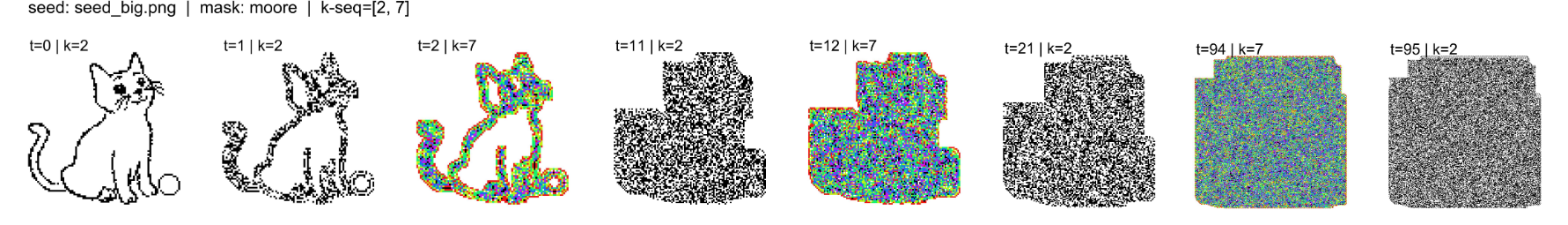}\\
		(b)\includegraphics[width=0.82\textwidth]{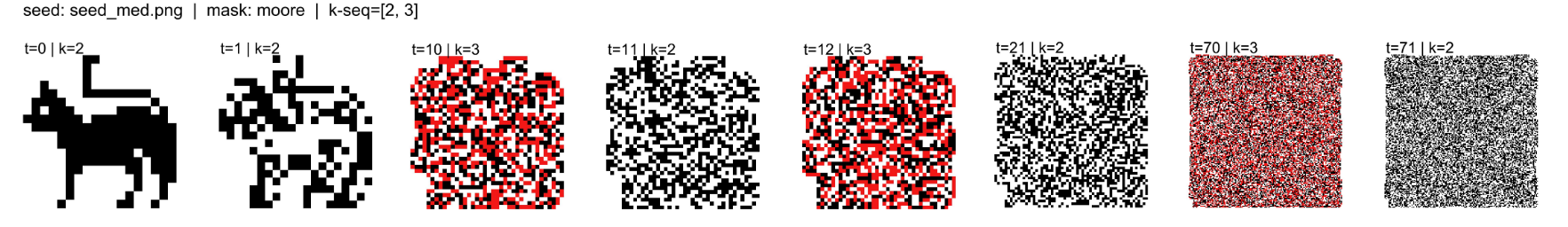}

		\caption{Big and medium seeds under alternating one-to-one 2$k$2$k$... iterations, $k>2$ odd, (a) medium and (b) big seeds on Moore mask, $k=3, 7$, resp. resulting in non-periodic figures becoming quickly chaotic - appears quicker in the case of big seeds.}
		\label{2k2k_big}
	\end{figure}
	
			\begin{figure}[h!]
		\centering
		(a)\includegraphics[width=0.82\textwidth]{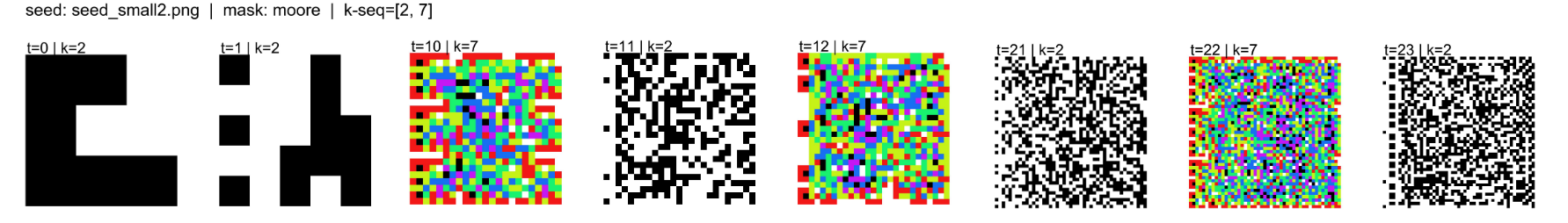}\\
		(a)\includegraphics[width=0.82\textwidth]{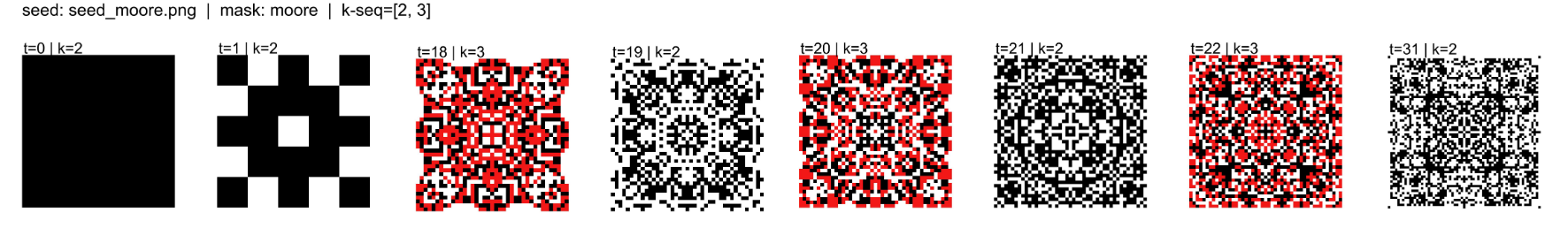}\\
		\caption{Small seeds under alternating one-to-one 2$k$2$k$... sequences, $k>2$ odd, (a) non-symmetric seed and/or mask resultuing in chaotic figures (here $k=7$, Moore mask), (b) symmetric seed and mask resulting in non-periodic, double symmetric figures of relatively stable density ornamental carpet-like figures.}
		\label{2k2k_small}
	\end{figure}
	
Fig. \ref{densities2} is showing typical density pattern of figures obtained via $2k2k\ldots$ sequences, $k\geq 2$, from  small double symmetric seeds and masks, for $k$ even and odd.

	\begin{figure}[h!]
	\centering
	\includegraphics[width=0.82\textwidth]{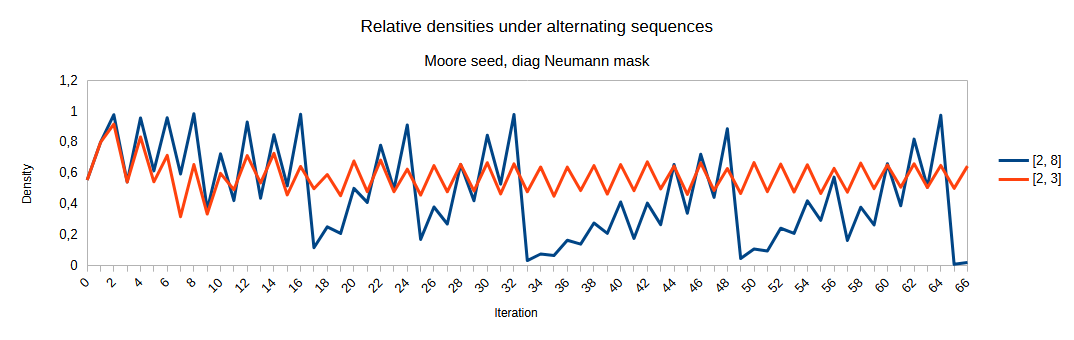}
	\caption{Density patterns of figures obtained under alternating one-to-one 2$k$2$k$... $k\geq 2$, from  small double symmetric seeds and masks, for $k$ even and odd.}
	\label{densities2}
\end{figure}
	
\paragraph{Mini-summary.}
The alternating sequence $[2k]$ with even $k$ preserves many properties 
of the constant binary case: strict periodicity (small periods between 4 and 16, 
depending on seed size) and characteristic density fluctuations. 

For odd $k$, the alternation already destroys binary periodicity. 
No exact seed returns are observed within feasible times. 
Large or medium seeds quickly lead to chaotic figures. 
Small seeds combined with non-symmetric masks behave similarly. 
Only in the case of small, double symmetric seeds and masks do we obtain 
figures with robust symmetry and relatively stable density, though still 
non-periodic. 

Thus, alternating $[2k]$ sequences interpolate between the trivial 
predictability of the binary case and the onset of aperiodic, 
carpet-like complexity.

In next subsection we shall classify and compare different classes of small seeds figures.


\subsection{Special figures: Carpets, Quasi–carpets, Rugs, and Sierpiński–like patterns\label{inherit}}

Building on the replication phenomena described in the previous sections,
we now turn to the classification of long-lived structures. 
These include carpets, quasi–carpets, rugs, and Sierpiński–like patterns, 
which emerge systematically under alternating sequences.

\paragraph{Definitions.}  
We distinguish three main classes:  
\begin{itemize}
	\item \textbf{Rugs}: figures without full double symmetry. Subclasses include:  
	\begin{itemize}
		\item \emph{Chaotic rugs} – irregular, no clear structure, density roughly constant;  
		\item \emph{Disappearing rugs} – self-similar fragments but periodically vanishes; Sierpiński–like rugs belong here (see Fig. \ref{2k2k_small2}(a));  
		\item \emph{Solid rugs} – at least one axis of symmetry, density does not vanish.  
	\end{itemize}
	\item \textbf{Quasi–carpets}: two axes of symmetry, but density fluctuates or decays periodically.  
	\item \textbf{Carpets}: two perpendicular axes of symmetry ("double symmetry"), density remains bounded away from zero and persists over long horizons.  
\end{itemize}

\paragraph{Shape inheritance.}   
As shown in Subsection 2.1, the overall outline of carpets and rugs 
is determined by the neighborhood mask, e.g. squares for Neumann/Moore, 
diamonds for diagonal Neumann, and so on (Table~\ref{tab:shape_inheritance}).

\paragraph{Symmetry inheritance.}  
The overall symmetry of the emerging figure depends jointly on the 
seed and the mask. Table~\ref{tab:symmetry} summarizes the observed outcomes.  

\begin{table}[h!]
	\centering
	\caption{Symmetry inheritance in alternating sequences $[2,k]$. 
		Rows: type of seed; columns: type of mask. The entries describe the 
		expected large-scale outcome.}
	\label{tab:symmetry}
	\renewcommand{\arraystretch}{1.3}
	\begin{tabular}{|p{2.8cm}|p{3.3cm}|p{3.3cm}|p{3.3cm}|}
		\hline
		\textbf{Seed $\backslash$ Mask} & 
		\textbf{Double-symmetric (2 axes)} & 
		\textbf{Single-symmetric (1 axis)} & 
		\textbf{Asymmetric (no symmetry)} \\
		\hline
		\textbf{Double-symmetric (2 axes)} 
		& Double-symmetric  
		& Single-axis symmetry survives 
		& Chaotic; no carpet \\
		\hline
		\textbf{Single-symmetric (1 axis)} 
		& Single-axis symmetry persists
		& Single-axis symmetry persists if axes concordant  
		& Chaotic; no carpet \\
		\hline
		\textbf{Asymmetric (no symmetry)} 
		& Single-axis symmetry may appear; no carpet  
		& One-axis reflection possible; no carpet  
		& Fully chaotic; no carpet \\
		\hline
	\end{tabular}
\end{table}

In short, Only the double $\times$ double case may lead to quasi–carpets or carpets; all other combinations degrade.

\paragraph{Examples.}  
\begin{itemize}
	\item \textbf{Rugs.} Fig.~\ref{fig:rug} shows a \emph{symmetric rug} with pentagonal outline (rocket mask).  
	Disappearing rugs arise for example in sequences $[2422]$ with triangular masks, often producing Sierpiński–like forms.  
	Chaotic rugs occur for non-symmetric seeds with conflicting symmetry axes (rocket–tannenbaum, roof–kite).  
	\item \textbf{Quasi–carpets.} We have already seen quasi- carpets appearing in $2k22...$ sequences for $k$ even, when both seed and mask are double symmetric. Figs.~ \ref{fig:quasi_hex} display high–symmetry figures with oscillating densities.  
	\item \textbf{Carpets.} Genuine carpets occur for sequences, e.g. $[2322]$, $[2522]$, $[2722]$ under Moore, Neumann, or diag–Neumann masks (Fig.~\ref{2k22carpet}).  
\end{itemize}
\begin{figure}[h!]
	\centering
	\includegraphics[width=0.77\textwidth]{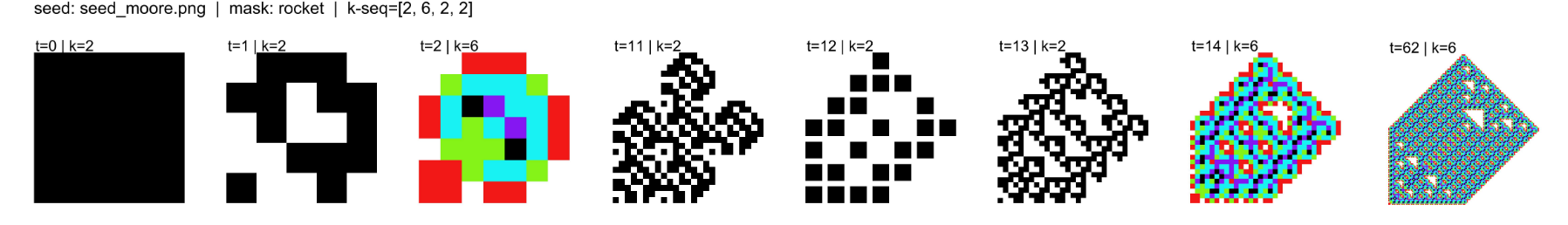}
	\caption{Example of a rug. 
		The mask is \emph{rocket}, seed: Moore. 
		The resulting figure forms a dense pentagonal tiling that is ``floor–tile–like'', 
		with high relative density. However, due to the lack of double symmetry it is 
		classified as a rug rather than a carpet.}
	\label{fig:rug}
\end{figure}

\begin{figure}[h!]
	\centering
	(a)\includegraphics[width=0.75\textwidth]{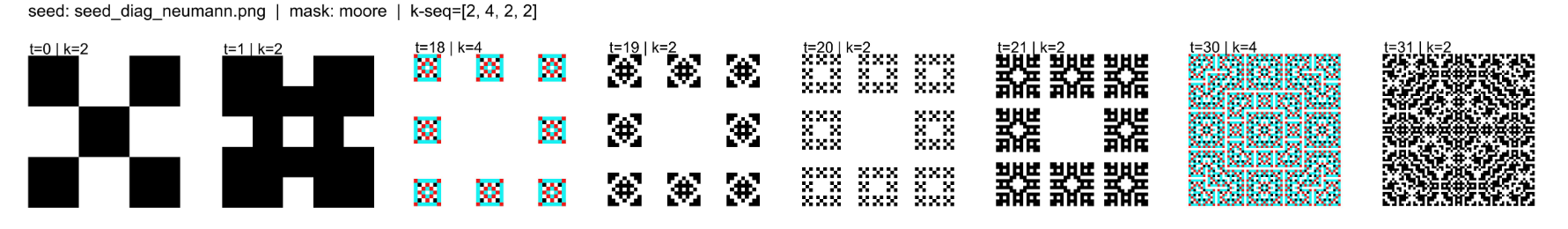}\\
	(b)\includegraphics[width=0.75\textwidth]{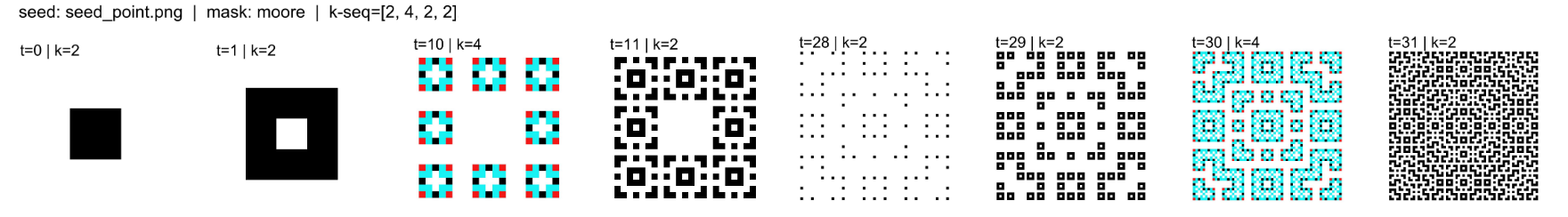}
	\caption{Examples of quasi–carpets on the Moore mask. 
		Figures exhibit two orthogonal symmetry axes but density oscillates 
		and periodically decays, preventing stabilization into true carpets. 
		Seeds: (a) one–point, (b) diag-Neumann, (c) Moore, sequence [2422].}
	\label{fig:quasi_hex}
\end{figure}

\begin{figure}[h!]
	\centering
	(a)\includegraphics[width=0.75\textwidth]{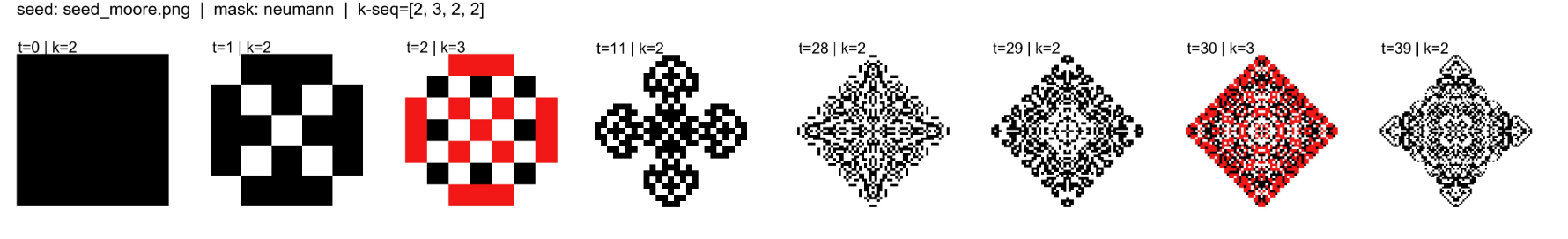}\\
	(b)\includegraphics[width=0.75\textwidth]{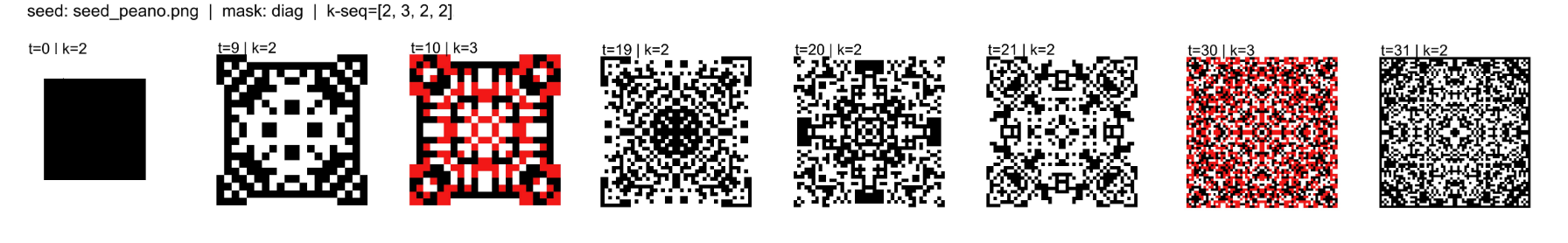}
	\caption{Carpets: Figures obtained via repetitive [2322] sequence, with symmetric seed and mask, resulting in non-periodic figures in the form of sophisticated ornamental carpet-like figures of relatively stable density: 
		(a) point seed, diag Neumann mask,  
		(b) Peano seed, diag Neumann mask.}
	\label{2k22carpet}
\end{figure}

\paragraph{Mini-summary.}
Carpets, quasi–carpets, and rugs provide a natural taxonomy of 
long-lived Laplacian figures. Rugs lack double symmetry and may 
exhibit chaotic or disappearing behavior. Quasi–carpets are 
double-symmetric but unstable in density. True carpets arise only 
for double-symmetric seeds and masks under selected alternating 
sequences (notably $[2k22]$), combining symmetry with sustained density. 
In all cases, the large-scale outline respects the geometry of the mask, 
confirming the robustness of shape inheritance.

\subsection{Regular sequence $2k222\ldots$}
We now investigate binary sequences with a single insertion of another modulus $k$, i.e. $2k222\ldots$. The main questions are whether periodicity survives, how density behaves, and under what conditions carpets or quasi–carpets may arise. As before we shall consider two cases: $k$ even and odd.

It occurs that 
\begin{enumerate}
	\item For big and medium seeds we observe approximate periodicity with shift: 
	the figure at $t=2$ reappears only vaguely at multiples of 32 (small period), 
	while otherwise the evolution becomes chaotic (see Fig.~\ref{fig:density-fingerprint}).
	\item For small seeds the figures resemble the binary case, showing periodic repetition of motifs 
	(small period 8 or 16) and relative density drops. 
	In the case of double–symmetric seeds and masks, quasi–carpets appear.
\end{enumerate}

The relative density pattern occuring for all $k$, seeds and masks is given in Figs \ref{2k2infty}, \ref{densities_infty}. 
For even $k$, density drops occur at multiples of 16, indicating periodicity.
For odd $k$, the same regularity appears but with a phase shift of 2, hence the seed never returns exactly.

\begin{figure}[h!]
	\centering
	(a)\includegraphics[width=0.72\textwidth]{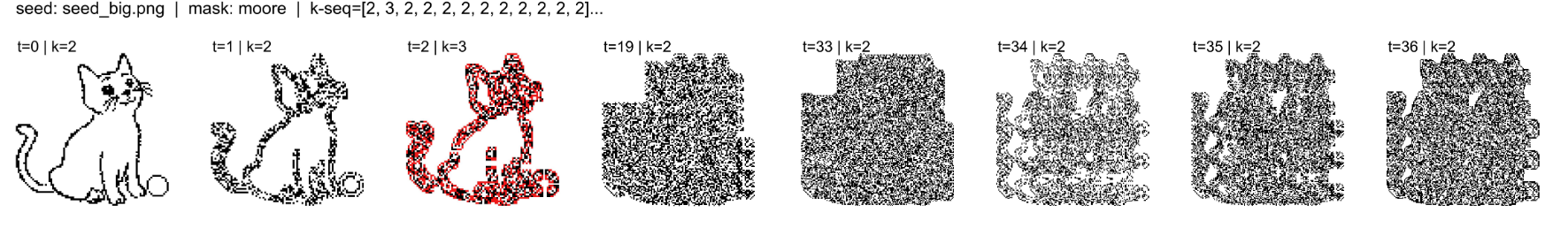}\\
	(c)\includegraphics[width=0.72\textwidth]{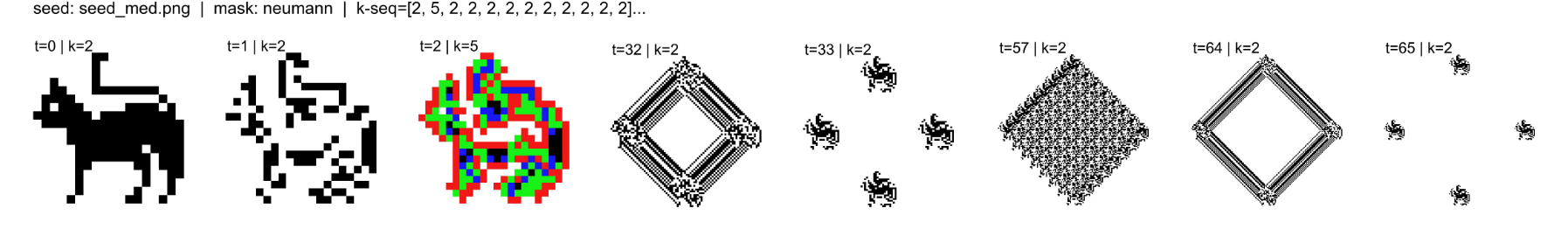}\\
	(b)\includegraphics[width=0.72\textwidth]{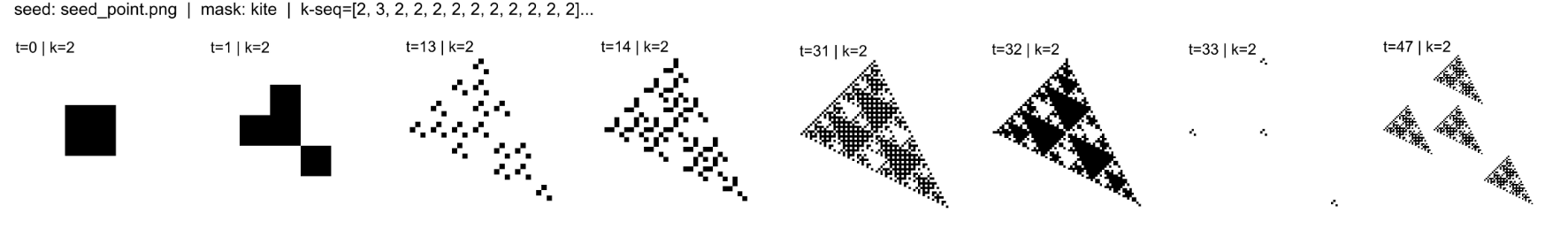}
	\caption{Examples of 2k22... figures, big medium and small seeds. 
		Unstable density is visible.}
	\label{2k2infty}
\end{figure}

\begin{figure}[h!] 
	\centering 
	\includegraphics[width=0.9\textwidth]{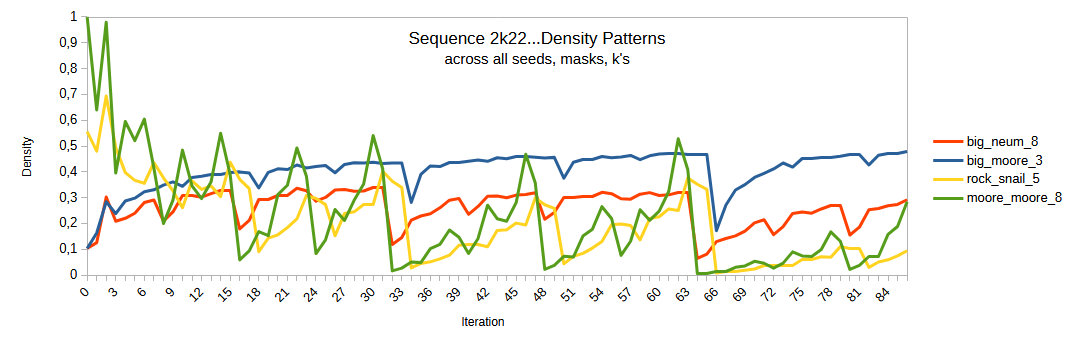} 
	\caption{Density patterns in $2k222\ldots$ sequence characteristic for all $k$'s, seeds and masks. Note the periodicity of 16 with even $k$ sequence, and the phase shift in case of odd $k$'s.} 
	\label{densities_infty} 
\end{figure}

\paragraph{Mini-summary.}
The sequence $[2k22\ldots]$ retains the binary periodicity pattern, 
with period~16. For even $k$, this periodicity leads to regular density drops, 
closely resembling the binary case. For odd $k$, the same structure is shifted 
(by two steps), so exact seed returns never occur. 
Thus, $[2k22\ldots]$ sequences interpolate between the strict periodicity 
of the binary case and the irregular behavior of alternating sequences.

\subsection{Carpets in the \texorpdfstring{$[2,k,2^s]$}{[2,k,2^s]} Family}

We now examine whether inserting a single non–binary step $k$ after the initial $2$,
followed by a binary tail, can prevent the characteristic density drop of the purely
binary sequence and thereby enable the formation of carpets (given suitable seeds and masks).
  
That is, we study sequences of the form
\[
[2,k,\underbrace{2,2,\ldots,2}_{s \text{ times}}],
\qquad s=1,\dots,36,\;\; k\in\{3,5,7,9,11\}.
\]

\paragraph{Method.}  
Only seed–mask configurations showing initial double symmetry were 
considered as candidates.  
For each sequence $[2,k,2^s]$ we simulated up to $t=80$ iterations 
(beyond the first universal decay around $t\approx 32$).  
A configuration was accepted as a carpet if it satisfied simultaneously:
\begin{itemize}
	\item density never dropped below $0.056$ of the bounding box area,
	\item no horizontal or vertical empty stripe wider than $10\%$ of the lattice appeared,
	\item no central hole larger than the same threshold was present.
\end{itemize}

\paragraph{Results.}  
Contrary to the initial expectation, the family $[2,k,2^s]$ does 
indeed contain carpets for suitable values of $k$ and $s$.  
Table~\ref{tab:2k2s} shows how often carpets appeared for different 
choices of $k$. $s=1,2,\ldots,8$, aggregated over all seeds and masks.  
The first ten rows are presented below; the full table (up to $s=36$) 
is given in Appendix~A.

\begin{table}[ht]
	\centering
	\caption{Number of carpet–producing cases in the family $[2,k,2^s]$ 
		for different $k$ and binary tail length $s$ with all small double-symmetric seeds and masks. 
		Explicit notation is used up to $s=3$, longer tails are denoted by powers of~2. Note that even numbers $k$ are lacking and the number $k=3$ the most prominent. Counts aggregated over all tested small double–symmetric seeds and masks; see Appendix A for extended data.}
	\label{tab:2k2s}
	\begin{tabular}{lcccccccc}
		\hline
		$k$ & $[2,k,2]$ & $[2,k,22]$ & $[2,k,222]$ & $[2,k,2^4]$ & $[2,k,2^5]$ & $[2,k,2^6]$ & $[2,k,2^7]$ & $[2,k,2^8]$ \\
		\hline
		3  & 12 &  8 &  9 & 10 &  9 &  8 &  4 &  5 \\
		5  &  9 &  6 &  7 &  3 &  8 &  3 &  2 &  2 \\
		7  &  9 &  7 &  7 &  4 &  8 &  4 &  2 &  3 \\
		9  &  7 &  5 &  5 &  2 &  7 &  1 &  1 &  0 \\
		11 &  7 &  5 &  5 &  2 &  7 &  1 &  1 &  0 \\
		\hline
	\end{tabular}
\end{table}

\paragraph{Discussion.}  

The results show that inserting a single non–binary step $k$ immediately after
the leading $2$ can stabilize carpet–like structures for a surprisingly broad
range of binary tails $2^s$. Without such a modification, the purely binary
sequence undergoes a decisive density collapse by about the 16th iteration. 
By contrast, the $[2,k,2^s]$ family maintains sufficient density and spatial
uniformity well beyond this critical point, in many cases up to $t\approx 35$–37,
thus enabling the emergence of genuine carpets (Table~\ref{tab:2k2s}). 

It is also noteworthy that even values of $k$ never produced carpets,
whereas odd $k$, especially $k=3$, accounted for the majority of positive cases.

\paragraph{Mini-summary.}
Sequences of the form $[2,k,2^s]$ provide the minimal modification of the 
binary case that is capable of sustaining genuine carpets. 
While the purely binary sequence inevitably collapses in density around 
iteration~16, the insertion of a single non–binary step $k$ delays this 
collapse and stabilizes the figure up to $t\approx 35$–37. 
Carpets emerge only for odd $k$, with $k=3$ producing the richest variety 
of cases. Even $k$ never produced carpets under our criteria. 
Thus, $[2,k,2^s]$ sequences mark the transition from the trivial periodic 
behavior of the binary family to long–lived, carpet–like structures.

\subsection{Example: repetitive sequence [2$k$22]}

We next consider repetitive schedules of the form $[2k22]$, 
which represent a minimal extension of the $[2,k,2^s]$ family. 
These sequences offer a natural test case for the stability of 
carpet–like figures under repeated alternation.

Figures obtained via Repetitive sequences [2$k$22] can be seen in Fig. \ref{2k22} and solely obtained by [2322] in Fig. \ref{2322}.
\begin{figure}[h!]
	\centering
	(a)\includegraphics[width=0.8\textwidth]{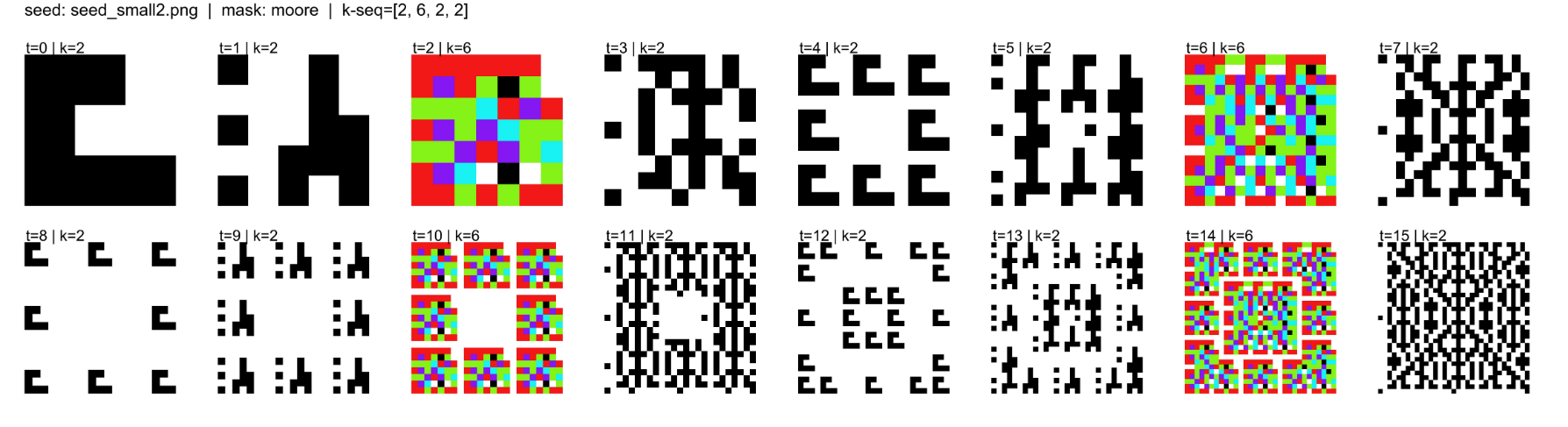}\\
	(b)\includegraphics[width=0.8\textwidth]{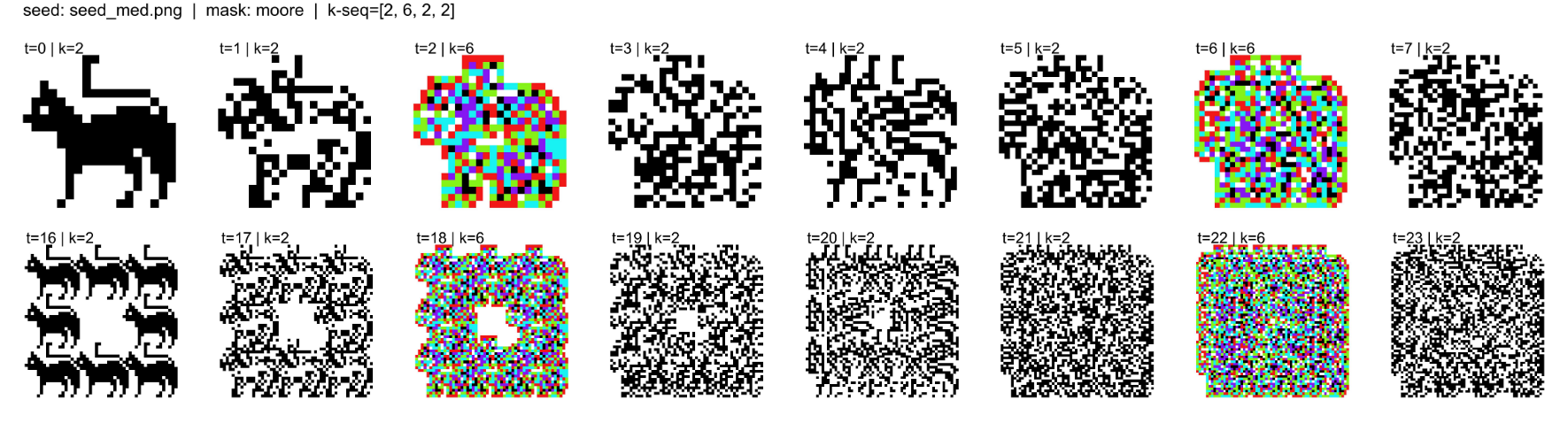}
	\caption{Figures obtained via repetitive [2622] sequence, Moore mask: (a) small seed, period 4 (b) medium period 32.}
	\label{2k22}
\end{figure}
In sequences [$2k22$], $k$ odd, in case of small double-symmetric seeds and masks we encounter ornamental carpet-like figures, see Fig. \ref{2322} (b).

\begin{figure}[h!]
	\centering
	(a)\includegraphics[width=0.8\textwidth]{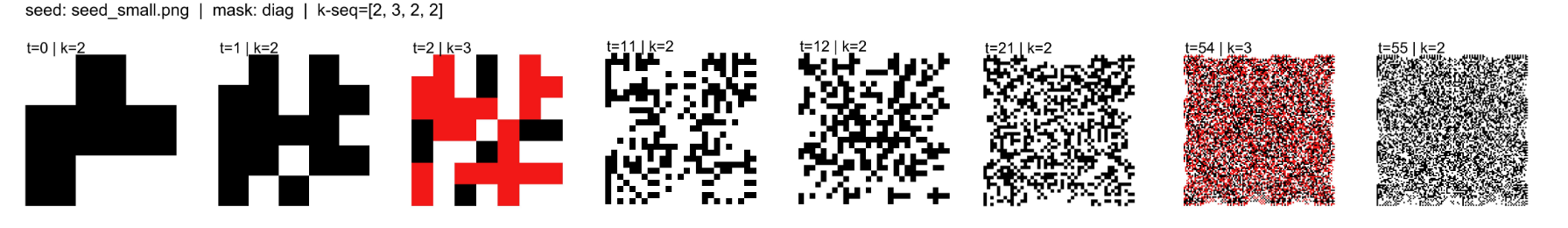}\\
	(b)\includegraphics[width=0.8\textwidth]{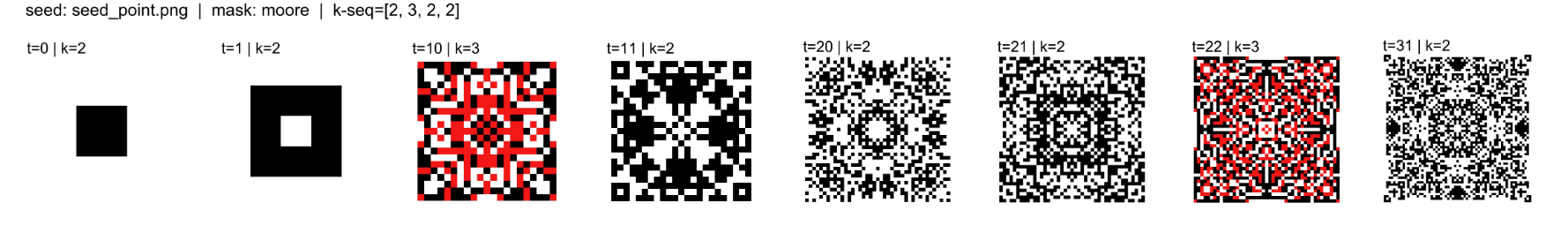}
	(c)\includegraphics[width=0.8\textwidth]{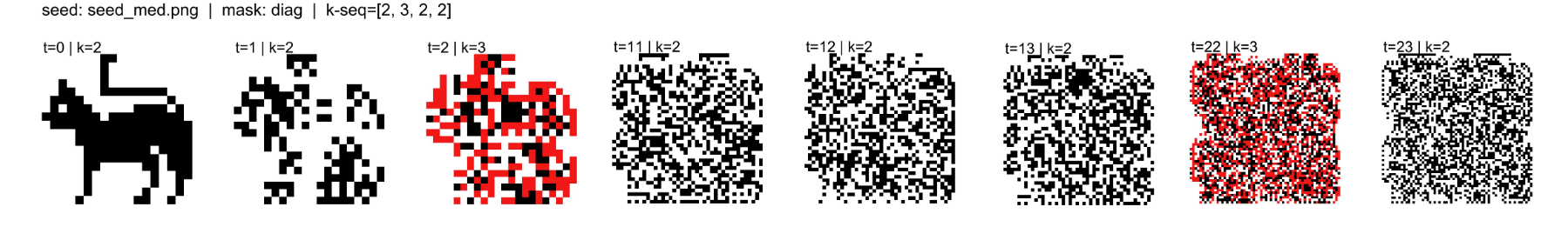}\\
	(d)\includegraphics[width=0.8\textwidth]{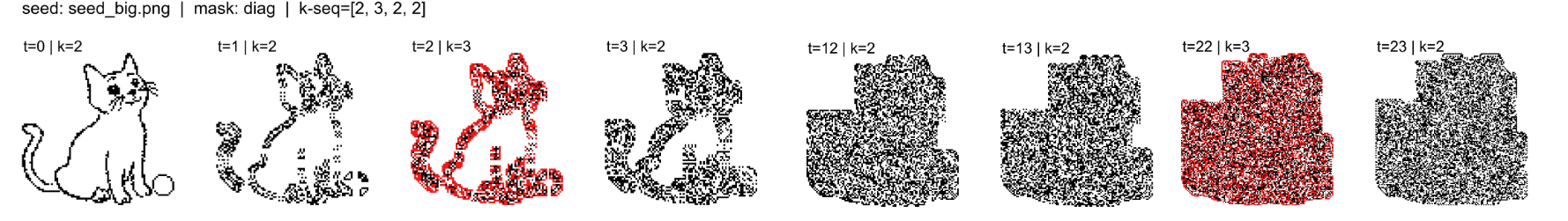}
	\caption{Figures obtained via repetitive [2322] sequence, diag Neumann mask: (a) small non-symmetric seed, chaotic figures (b) symmetric seed and mask: ornamental carpet-like figures, (c) medium and (d) big seeds resulting in non-periodic figures becoming quickly chaotic - appears quicker in the case of big seeds.}
	\label{2322}
\end{figure}

A collection of carpets can also be found in the Appendix.

\paragraph{Comparison: prime vs.\ composite $k$.}
For prime $k$ (e.g.\ $3,5,7$), replication follows a clear $k$-adic law,
with regular grids appearing at multiples of a characteristic base period.
Composite moduli such as $k=6$ exhibit intertwined subperiods inherited
from their prime factors (here $2$ and $3$), which can interfere
constructively or destructively.
This results in irregular or ``beating'' periodicities and makes $k=6$
a borderline case: neither purely even (binary-like) nor purely prime.

This mirrors the dichotomy observed in constant-$k$ sequences: primes support regular replication laws, while composites mix incompatible scales.

\paragraph{Mini-summary.}
For $k=3$, the $[2322]$ schedule produces the richest carpet-like figures: 
high–density, symmetric patterns that never return to the seed, thus realizing 
genuine non-periodic carpets. For other odd $k$, seed returns reappear with 
periods that are typically small multiples of $8$, resembling the binary case. 
Composite values such as $k=6$ yield mixed behavior with irregular or 
``beating'' periodicities, consistent with the interference of their prime factors.

\begin{table}[h!]
	\centering
	\caption{Carpets appearance depending on seeds and masks in [$2k22$] sequences. Note that even numbers do not appear. 
		Sequences with number 3 appear seventeen times, with 5: ten times, with 7: eight times. Counts refer to observed carpets up to $t=80$ iterations; see Appendix~A for full statistics.}
	\label{tab:carpets}
	\begin{tabular}{l|c|c|c}
		\toprule
		Seed $\backslash$ Mask & Moore & Diagonal & Neumann \\
		\midrule
		Moore   & {[2322], [2522]} & {[2322]} & {[2322], [2722]} \\
		Diagonal& {[2322], [2522], [2722]} & {[2322]} & {[2322]} \\
		Neumann & {[2322], [2722]}        & --- & --- \\
		Peano   & {[2322], [2522], [2722],} & {[2322], [2522], [2722]} & --- \\
		Point   & {[2322], [2522], [2722]} & {[2322]} & {[2322]} \\
		\bottomrule
	\end{tabular}
\end{table}

(Color entropy plots are provided in Appendix~\ref{app:entropy}. 
They exhibit only weak signatures of periodicity and do not sharpen the 
contrast between replicative and shifted regimes.)

\paragraph{Transition to Conclusions.}
The exploration of repetitive sequences $[2k22]$ completes our survey of
minimal binary perturbations. Across constant, alternating, and repetitive
families we observed a consistent pattern: prime values of $k$, especially
$k=3$, promote stable carpet–like structures, while composite $k$ introduce
interfering subperiods that disrupt long-term regularity. 
These findings highlight the delicate balance between local update rules
(seed and mask symmetries) and global sequence design in sustaining
non-trivial, high-density figures. 
We now turn to the concluding discussion, where we summarize these results
and outline open problems.

\section*{Summary and Discussion}

In this work we explored the dynamics of binary and mixed sequences of the 
form $[2,k,2^s]$ under a variety of seeds and neighborhood masks. 
The analysis revealed a surprisingly rigid classification:  

\begin{itemize}
	\item Purely binary sequences $[2,2,2,\dots]$ or constant sequences 
	$[k,k,k,\dots]$ always collapse into strict periodicity, with densities 
	oscillating in predictable cycles. Even apparently irregular seeds eventually 
	crystallize into multiplied replicas on a rigid grid.  
	\item Insertion of a single non-binary step $k$ can dramatically alter 
	the behavior. Large and medium seeds, as well as asymmetric masks, 
	quickly evolve into chaotic figures with no carpet-like order.  
	\item If one element (seed or mask) is symmetric while the other is not, 
	a single-axis reflection may persist, but the overall figure remains unstable.  
	\item Only the combination of double-symmetric seeds with double-symmetric 
	masks, together with odd values of $k$, produces high-density 
	quasi–carpets, often persisting far beyond the universal density drop 
	characteristic of the binary case.  
	\item Composite values of $k$ built from two distinct primes 
	(e.g.\ $k=6$ or $k=10$) form a borderline exception: here we observe 
	quasi-periodic fluctuations rather than strict repetition, 
	but no qualitatively new structures arise.
\end{itemize}

This symmetry–based rule provides a unifying heuristic: the interplay of seed, 
mask, and inserted $k$ predicts whether the system dissolves into chaos, 
retains partial symmetry, or yields a genuine quasi–carpet.

\medskip
We classified the observed figures into three broad families:  
\emph{rugs}, \emph{quasi–carpets}, and \emph{carpets}.  
Although derived from a purely mathematical model, this taxonomy 
mirrors broader principles of self-organization.  
Whitesides and Grzybowski \cite{Whitesides2002} 
showed that across all scales, from molecular clusters to macroscopic
granular matter, simple local rules generate emergent global order.  
More recently, Singh \emph{et al.} \cite{Singh2024NonEquilibrium} 
emphasized that persistent ``living matter-like'' architectures 
arise only under non-equilibrium driving, where transient assemblies 
are stabilized by constant fluxes of energy and matter.

In this perspective, our classification aligns naturally:  
rugs correspond to transient high-density assemblies,  
quasi–carpets to symmetric but fluctuating non-equilibrium figures,  
and carpets to stabilized, long-lived architectures.  
In both the mathematical and experimental contexts, stability arises 
only when symmetry and driving are tuned together, whereas mismatched 
conditions lead to transient or chaotic assemblies.

\section*{Conclusion}

Our results suggest that the classification of carpets is not merely a 
computational curiosity, but provides a conceptual bridge between dynamical 
systems, fractal physics, and experimental self-assembly.  
They demonstrate that even in minimalistic lattice models, strict rules 
govern whether complexity survives or collapses into trivial repetition.  

An intriguing open problem remains: are these carpets truly aperiodic, or do 
they in fact possess an extraordinarily long hidden period, analogous to a 
Poincaré recurrence? Addressing this question would clarify whether quasi–carpets 
are genuinely novel structures, or whether they ultimately belong to the same 
periodic universe as their binary ancestors.

Looking ahead, future work could include a systematic classification of masks 
by their ability to sustain carpets, and direct comparisons with experimental 
realizations of self-assembled fractals. Such cross-talk between mathematics 
and chemistry may ultimately clarify how minimal discrete rules can generate 
architectures that resemble those of living matter.

\medskip
In summary, discrete Laplacian dynamics offer a fertile ground where 
mathematics and self-assembly meet. The discovery of carpets, rugs, and 
quasi–carpets shows that even the simplest iterative rules can yield 
architectures of surprising richness. Clarifying whether these structures 
are truly aperiodic, and how they relate to experimental realizations, 
remains an open challenge that connects pure mathematics to the physics 
and chemistry of complex systems.

	\subsection*{Acknowledgments}  
The author thanks anonymous reviewers for their valuable comments.

\subsection*{Declaration of interest statement}  
The author reports there are no competing interests to declare.

\section*{Appendix}

\begin{figure}[h!]
	\centering
	\begin{tikzpicture}
		\begin{axis}[
			width=0.8\linewidth, height=0.36\linewidth,
			xmin=0, xmax=40, ymin=0, ymax=0.9,
			xlabel={$t$}, ylabel={$\rho_t$},
			legend style={at={(0.02,0.98)},anchor=north west, cells={anchor=west}, font=\small},
			tick align=outside, tick pos=left
			]
			
			\addplot+[thick,blue] coordinates {
				(0,0.10) (2,0.28) (4,0.42) (6,0.55) (8,0.12)
				(10,0.30) (12,0.46) (14,0.57) (16,0.11)
				(18,0.29) (20,0.45) (22,0.56) (24,0.12)
				(26,0.28) (28,0.44) (30,0.55) (32,0.11)
				(34,0.30) (36,0.46) (38,0.57) (40,0.12)
			};
			\addlegendentry{$222\ldots$ (periodic, seed return)}
			
			\addplot+[thick,red] coordinates {
				(0,0.46) (2,0.08) (4,0.24) (6,0.39) (8,0.50)
				(10,0.09) (12,0.25) (14,0.41) (16,0.52)
				(18,0.10) (20,0.26) (22,0.40) (24,0.51)
				(26,0.09) (28,0.27) (30,0.41) (32,0.53)
				(34,0.11) (36,0.27) (38,0.42) (40,0.54)
			};
			\addlegendentry{$23222\ldots$ (shifted, phase 2)}
			
			\addplot+[thick,orange] coordinates {
				(0,0.44) (2,0.12) (4,0.29) (6,0.43) (8,0.48)
				(10,0.13) (12,0.30) (14,0.44) (16,0.50)
				(18,0.14) (20,0.31) (22,0.46) (24,0.52)
				(26,0.15) (28,0.32) (30,0.47) (32,0.53)
				(34,0.16) (36,0.33) (38,0.48) (40,0.54)
			};
			\addlegendentry{$25222\ldots$ (shifted, phase 3)}
			
			\addplot+[thick,green!70!black] coordinates {
				(0,0.42) (2,0.14) (4,0.28) (6,0.41) (8,0.47)
				(10,0.15) (12,0.29) (14,0.42) (16,0.49)
				(18,0.16) (20,0.30) (22,0.44) (24,0.50)
				(26,0.17) (28,0.31) (30,0.45) (32,0.51)
				(34,0.18) (36,0.32) (38,0.46) (40,0.52)
			};
			\addlegendentry{$27222\ldots$ (shifted, phase 3)}
			
		\end{axis}
	\end{tikzpicture}
	\caption{Relative density $\rho_t$ for representative iteration sequences. 
		Binary $222\ldots$ shows regular minima at $t\equiv 0 \pmod{8}$ (periodic with seed return). 
		The ternary–triggered $23222\ldots$ exhibits shifted periodicity (phase 2, no seed return). 
		Similarly, $25222\ldots$ and $27222\ldots$ yield shifted cycles (phase 3).}
	\label{fig:density-fingerprint}
\end{figure}
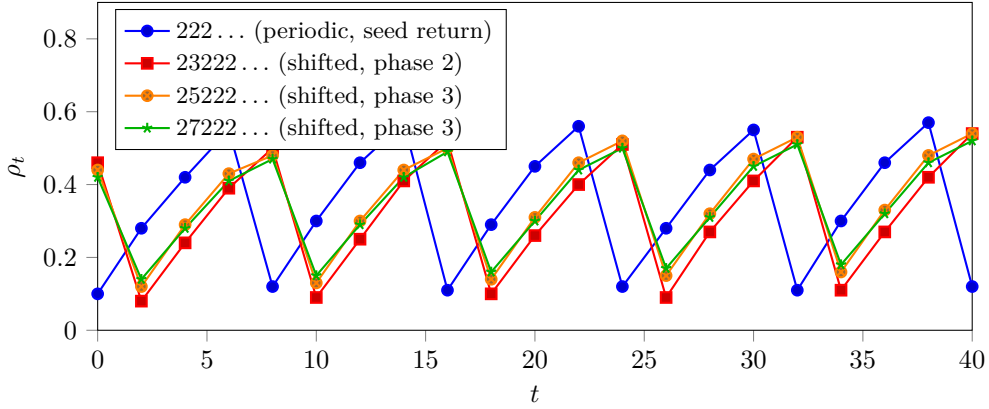

\begin{figure}[h!]
	\centering
	\begin{tikzpicture}
		\begin{axis}[
			width=0.8\linewidth, height=0.36\linewidth,
			xmin=0, xmax=40, ymin=0, ymax=2.0,
			xlabel={$t$}, ylabel={$H_t$},
			legend style={at={(0.02,0.98)},anchor=north west, cells={anchor=west}, font=\small},
			tick align=outside, tick pos=left
			]
			
			\addplot[gray!50, dashed] coordinates {(0,0) (0,2)};
			\addplot[gray!50, dashed] coordinates {(8,0) (8,2)};
			\addplot[gray!50, dashed] coordinates {(16,0) (16,2)};
			\addplot[gray!50, dashed] coordinates {(24,0) (24,2)};
			\addplot[gray!50, dashed] coordinates {(32,0) (32,2)};
			\addplot[gray!50, dashed] coordinates {(40,0) (40,2)};
			
			\addplot+[thick,blue] coordinates {
				(0,0.10) (2,0.35) (4,0.62) (6,0.85) (8,0.12)
				(10,0.40) (12,0.68) (14,0.88) (16,0.14)
				(18,0.39) (20,0.66) (22,0.87) (24,0.12)
				(26,0.38) (28,0.65) (30,0.86) (32,0.13)
				(34,0.40) (36,0.67) (38,0.88) (40,0.15)
			};
			\addlegendentry{$H_t$: $222\ldots$ (seed return, phase 0)}
			
			\addplot+[thick,red] coordinates {
				(0,1.10) (2,0.22) (4,0.55) (6,0.78) (8,1.12)
				(10,0.24) (12,0.57) (14,0.79) (16,1.13)
				(18,0.25) (20,0.58) (22,0.80) (24,1.14)
				(26,0.26) (28,0.59) (30,0.82) (32,1.15)
				(34,0.27) (36,0.60) (38,0.83) (40,1.16)
			};
			\addlegendentry{$H_t$: $23222\ldots$ (shift, phase 2)}
			
			\addplot+[thick,orange] coordinates {
				(0,1.05) (2,0.95) (3,0.35) (4,0.70) (6,0.88) (8,1.06)
				(10,0.96) (11,0.36) (12,0.71) (14,0.89) (16,1.07)
				(18,0.97) (19,0.37) (20,0.72) (22,0.90) (24,1.08)
				(26,0.98) (27,0.38) (28,0.73) (30,0.91) (32,1.09)
				(34,0.99) (35,0.39) (36,0.74) (38,0.92) (40,1.10)
			};
			\addlegendentry{$H_t$: $25222\ldots$ (shift, phase 3)}
			
			\addplot+[thick,green!70!black] coordinates {
				(0,1.00) (2,0.98) (3,0.45) (4,0.72) (6,0.90) (8,1.02)
				(10,1.00) (11,0.46) (12,0.73) (14,0.91) (16,1.03)
				(18,1.01) (19,0.47) (20,0.74) (22,0.92) (24,1.04)
				(26,1.02) (27,0.48) (28,0.75) (30,0.93) (32,1.05)
				(34,1.03) (35,0.49) (36,0.76) (38,0.94) (40,1.06)
			};
			\addlegendentry{$H_t$: $27222\ldots$ (shift, phase 3)}
			
		\end{axis}
	\end{tikzpicture}
	\caption{Color entropy $H_t$ (Appendix).  
		Entropy shows only weak signatures of the phase structure: 
		for $222\ldots$ shallow dips occur at $t\equiv 0 \ (\mathrm{mod}\ 8)$ (seed return), 
		while for shifted sequences $23222\ldots$, $25222\ldots$, $27222\ldots$ 
		the minima are phase–shifted (near $t\equiv 2$ or $3 \ (\mathrm{mod}\ 8)$) and less pronounced. 
		Overall, $H_t$ does not sharpen the contrast beyond relative density.}
	\label{app:entropy}
\end{figure}
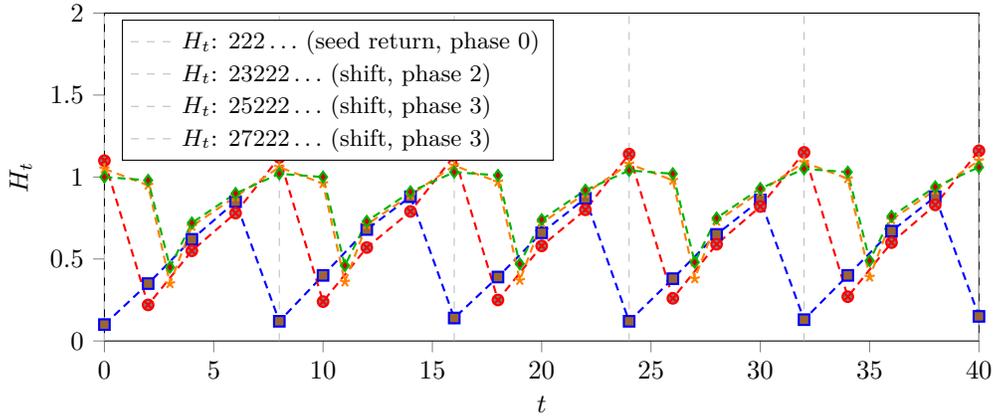

\begin{table}[ht]
	\centering
	\caption{Number of carpets obtained by introducing periodicity to the sequence $2k22...$ with all small double-symmetric seeds and masks. Extended results for $[2,k,2^s]$, showing the number of carpets found for $s=1,\dots ,36$.}
	\label{tab:2k2s_full}
	\begin{tabular}{lccccc}
		\hline
		$s$ & $k=3$ & $k=5$ & $k=7$ & $k=9$ & $k=11$ \\
		\hline
		$2^1$  & 12 &  9 &  9 &  7 &  7 \\
		$2^2$  &  8 &  6 &  7 &  5 &  5 \\
		$2^3$  &  9 &  7 &  7 &  5 &  5 \\
		$2^4$  & 10 &  3 &  4 &  2 &  2 \\
		$2^5$  &  9 &  8 &  8 &  7 &  7 \\
		$2^6$  &  8 &  3 &  4 &  1 &  1 \\
		$2^7$  &  4 &  2 &  2 &  1 &  1 \\
		$2^8$  &  5 &  2 &  3 &  -- &  -- \\
		$2^9$  &  5 &  2 &  3 &  1 &  1 \\
		$2^{10}$ & 8 &  4 &  3 &  1 &  1 \\
		$2^{11}$ & 6 &  3 &  4 &  2 &  2 \\
		$2^{12}$ & 4 &  1 &  1 &  -- &  -- \\
		$2^{13}$ & 6 &  2 &  3 &  -- &  -- \\
		$2^{14}$ & 4 &  3 &  3 &  2 &  2 \\
		$2^{15}$ & 4 &  3 &  4 &  2 &  2 \\
		$2^{16}$ & 2 &  2 &  1 &  -- &  -- \\
		$2^{17}$ & 3 &  2 &  -- &  -- &  -- \\
		$2^{18}$ & 4 &  -- &  1 &  -- &  -- \\
		$2^{19}$ & 4 &  2 &  2 &  1 &  1 \\
		$2^{20}$ & 4 &  2 &  1 &  1 &  1 \\
		$2^{21}$ & 3 &  1 &  1 &  -- &  -- \\
		$2^{22}$ & 2 &  -- &  -- &  -- &  -- \\
		$2^{23}$ & 3 &  -- &  -- &  -- &  -- \\
		$2^{24}$ & 3 &  -- &  -- &  -- &  -- \\
		$2^{25}$ & 3 &  -- &  -- &  -- &  -- \\
		$2^{26}$ & 3 &  -- &  -- &  -- &  -- \\
		$2^{27}$ & 3 &  -- &  -- &  -- &  -- \\
		$2^{28}$ & 3 &  -- &  -- &  -- &  -- \\
		$2^{29}$ & 3 &  -- &  -- &  -- &  -- \\
		$2^{30}$ & 3 &  -- &  -- &  -- &  -- \\
		$2^{31}$ & 3 &  -- &  -- &  -- &  -- \\
		$2^{32}$ & 3 &  -- &  -- &  -- &  -- \\
		\hline
	\end{tabular}
\end{table}

\begin{figure}[h!] 
	\centering 
	\includegraphics[width=0.97\textwidth]{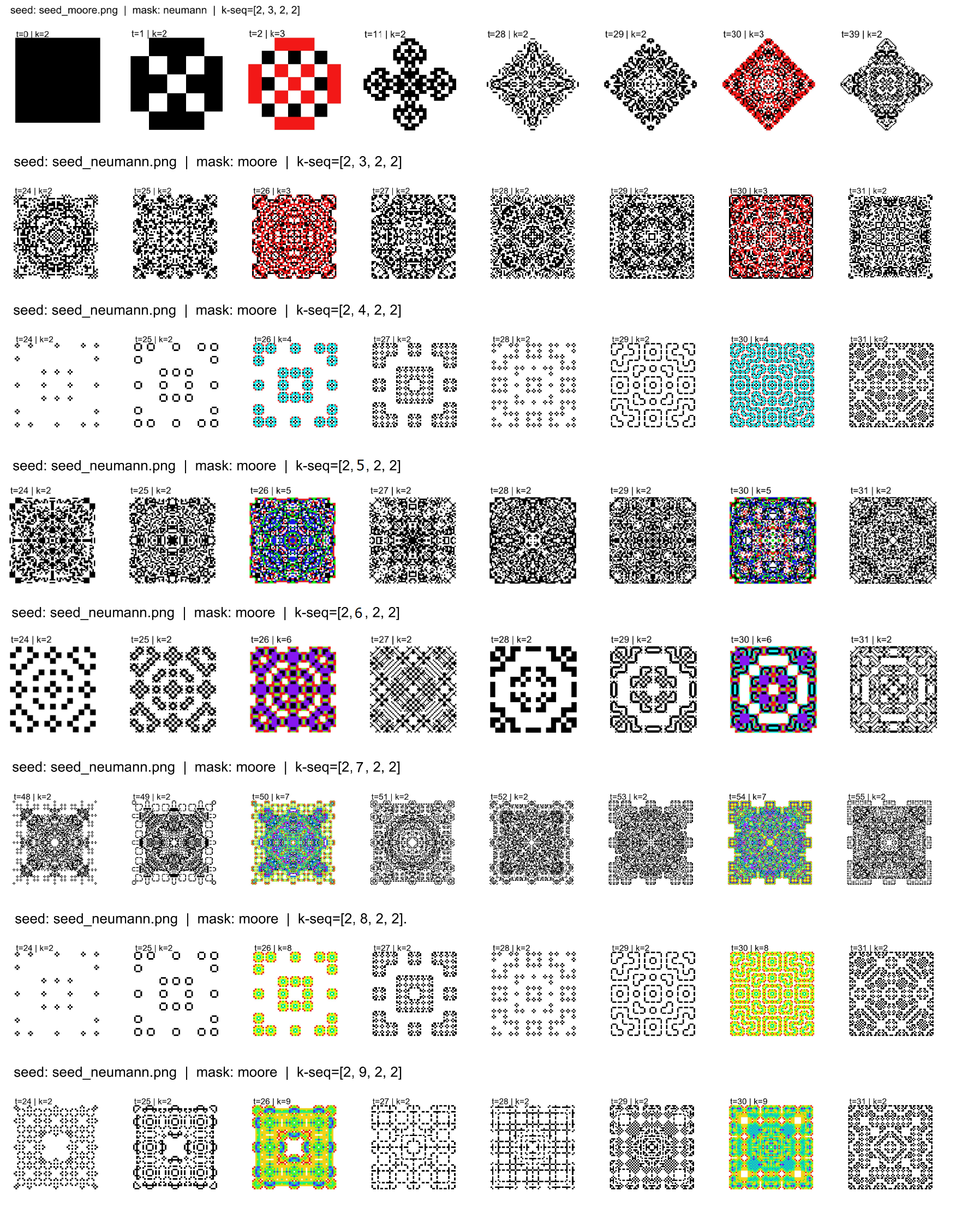} 
	\caption{Carpets and quasi-carpets obtained in [2k22] iterations. 
		For $k=3,5$ we have carpets (first and third row), for $k=4,6,8,9$ quasi–carpets. 
		Seed: Neumann, mask: Moore (except the top: mask diag Neumann).} 
	\label{carpets} 
\end{figure}

\bibliographystyle{amsplain}
\bibliography{bibliography} 

\providecommand{\bysame}{\leavevmode\hbox to3em{\hrulefill}\thinspace}
\providecommand{\MR}{\relax\ifhmode\unskip\space\fi MR }
\providecommand{\MRhref}[2]{%
  \href{http://www.ams.org/mathscinet-getitem?mr=#1}{#2}
}
\providecommand{\href}[2]{#2}
\begin{thebibliography}{1}

\bibitem{Hadlich2011}
C.~Hadlich et~al., \emph{Fixed point theorem, periodicity theorem and binomial
  and trinomial sequences for iteration dynamical systems of discrete
  laplacians on the plane lattice}, Dynamical systems: Theories to Applications
  and Applications to Theories, RIMS Kokyuroku, vol. 1742, Research Institute
  for Mathematical Sciences, Kyoto University, 2011, pp.~68--81.

\bibitem{Ilachinski2001}
Andrew Ilachinski, \emph{Cellular automata: A discrete universe}, World
  Scientific, 2001.

\bibitem{Lawrynowicz2019}
Julian {\L}awrynowicz, Osamu Suzuki, Agnieszka Niemczynowicz, and Ma{\l}gorzata
  {Nowak-K\k{e}pczyk}, \emph{Fractals and chaos related to
  ising--onsager--zhang lattices: quaternary approach vs.\ ternary approach},
  Advances in Applied Clifford Algebras \textbf{29} (2019), no.~3, 29--45.

\bibitem{NowakKepczyk2025Preprint}
Małgorzata Nowak-Kępczyk, \emph{Fractal patterns in discrete laplacians:
  Iterative construction on 2d square lattices}, 2025, Preprint, submitted for
  publication.

\bibitem{Sendker2024Fractal}
Franziska~L. Sendker, Yat~Kei Lo, Thomas Heimerl, et~al., \emph{Emergence of
  fractal geometries in the evolution of a metabolic enzyme}, Nature
  \textbf{628} (2024), no.~8009, 894--900.

\bibitem{Singh2024NonEquilibrium}
Abhishek Singh, Payel Parvin, Bapan Saha, and Dibyendu Das,
  \emph{Non-equilibrium self-assembly for living matter-like properties},
  Nature Reviews Chemistry \textbf{8} (2024), xxx--xxx.

\bibitem{Suzuki2018Geometrical}
O.~Suzuki, J.~Ławrynowicz, M.~Nowak-Kępczyk, and M.~Zubert, \emph{Some
  geometrical aspects of binary, ternary, quaternary, quinary and senary
  structures in physics}, Bulletin de la Société des Sciences et des Lettres
  de Łódź, Série: Recherches Déformées \textbf{68} (2018), no.~2,
  109--122.

\bibitem{Whitesides2002}
George~M. Whitesides and Bartosz Grzybowski, \emph{Self-assembly at all
  scales}, Science \textbf{295} (2002), no.~5564, 2418--2421.

\end{thebibliography}

\end{document}